\let\C\undefined
\numberwithin{equation}{section}
\theoremstyle{plain}
\newtheorem{proposition}{Proposition}[section]
\newtheorem{theorem}[proposition]{Theorem}
\theoremstyle{definition}
\newtheorem{definition}[proposition]{Definition}
\theoremstyle{remark}
\crefname{myexample}{example}{examples}
\newcommand{\Rset}{\mathbb{R}}
\newcommand{\Deriv}{\mathrm{D}}
\newcommand{\dext}{\mathrm{d}}
\newcommand{\Nset}{\mathbb{N}}
\newcommand{\Sset}{\mathbb{S}}
\newcommand{\defeq}{\coloneqq}
\newcommand{\dif}{\;\mathrm{d}}
\DeclarePairedDelimiter{\abs}{\lvert}{\rvert}
\DeclarePairedDelimiter{\norm}{\lVert}{\rVert}
\DeclarePairedDelimiter{\seminorm}{\lvert}{\rvert}
\DeclarePairedDelimiter{\brk}{(}{)}
\DeclarePairedDelimiterX\dualprod[2]{(}{)}{#1\cdot #2}
\DeclarePairedDelimiterX\intvo[2]{(}{)}{#1, #2}
\DeclarePairedDelimiterX\intvc[2]{[}{]}{#1, #2}
\DeclarePairedDelimiterX\intvl[2]{(}{]}{#1, #2}
\DeclarePairedDelimiterX\intvr[2]{[}{)}{#1, #2}
\providecommand{\st}{\,\vert\,}
\newcommand\stSymbol[1][]{%
\nonscript\;#1\vert
\allowbreak
\nonscript\;
\mathopen{}}
\DeclarePairedDelimiterX\set[1]\{\}{%
\renewcommand\st{\stSymbol[\delimsize]}
#1
}
\DeclareMathOperator{\Lin}{Lin}
\crefname{subsection}{§}{§§}
\newcommand{\familyname}[1]{\textsc{#1}}
\begin{document}

\title[Limiting Sobolev estimates and cancelling differential operators]{Limiting Sobolev estimates for vector fields and cancelling differential operators}
\author{Jean Van Schaftingen}
\address{Université catholique de Louvain (UCLouvain), Institute de Recherche en Mathématique et Physique (IRMP), Chemin du Cyclotron 2 bte L7.01.01, 1348 Louvain-la-Neuve, Belgium}
\email{Jean.VanSchaftingen@UCLouvain.be}

\thanks{These lecture notes were written on the occasion of the Spring School on Analysis 2023 ``Function Spaces and Applications XII'' held at Paseky nad Jizerou from May 28 to June 3, 2023}

\subjclass[2020]{35A23 (26D15, 35E05, 42B30, 42B35, 46E35)}

\begin{abstract}
These notes present Sobolev-Gagliardo-Nirenberg endpoint estimates for classes of homogeneous vector differential operators. Away of the endpoint cases, the classical Calder\'on--Zygmund estimates show that the ellipticity is necessary and sufficient to control all the derivatives of the vector field. In the endpoint case, Ornstein showed that there is no nontrivial estimate on same-order derivatives. 
On the other hand endpoint Sobolev estimates were proved for the deformation operator (Korn-Sobolev inequality by M.J. Strauss) and for the Hodge complex (Bourgain and
Brezis). The class of operators for which such Sobolev estimates holds can be characterized by a cancelling condition. The estimates rely on a duality estimate for $L^1$ vector fields satisfying some conditions on the derivatives, combined with classical algebraic and harmonic analysis techniques. 
This characterization unifies classes of known inequalities and extends to the case of Hardy inequalities.
\end{abstract}

\maketitle

\tableofcontents 

\section{Sobolev inequalities for vector fields}

\subsection{Sobolev inequality}
The \emph{Sobolev inequality} is a fundamental tool in the study of Sobolev spaces of weakly differentiable functions. 
It states that for every 
\(n, k \in \Nset \setminus \set{0}\), \(\ell \in \set{0, \dotsc, k - 1}\) and \(p \in  [1, \frac{n}{k - \ell})\) there exists a constant \(\Cl{cst_Eeh9quaeQu2iyeeng1jaechu} \in \intvo{0}{\infty}\) such that for each function \(u \in C^\infty_c (\Rset^n, \Rset)\) the inequality
\begin{equation}
\label{eq_AhR2ooxi0teixahheePoa5hu}
  \brk[\Big]{\int_{\Rset^n} \abs{\Deriv^{\ell} u}^\frac{np}{n - (k - \ell)p}}^{1 - \frac{(k - \ell)p}{n}}
  \le 
  \Cr{cst_Eeh9quaeQu2iyeeng1jaechu}
  \int_{\Rset^n} \abs{\Deriv^{k} u}^p
\end{equation}
holds. 
The original statement of the estimate \eqref{eq_AhR2ooxi0teixahheePoa5hu} when \(p > 1\) goes back to Sergei Lvovich \familyname{Sobolev}, with a proof based on what is called nowadays the Hardy--Littlewood--Sobolev interpolation inequality \cite{Sobolev_1938} (for a modern presentation, see for example \cite{Stein_1970}*{ch.\thinspace V \S 2.2}).
The case \(p = 1\) remained open for some time --- it is mentioned for example in the first edition of Laurent \familyname{Schwartz}'s \emph{Théorie des distributions} \citelist{\cite{Schwartz_1950}\cite{Schwartz_1951}} --- before being settled twenty years later by Emilio \familyname{Gagliardo} \cite{Gagliardo_1958} and Louis \familyname{Nirenberg} \cite{Nirenberg_1959}. 
Interestingly, thanks to its simplicity, Gagliardo and Nirenberg's proof has been chosen by many authors to prove pedagogically the full scale of Sobolev inequalities \eqref{eq_AhR2ooxi0teixahheePoa5hu} through a shrewd application of the chain rule (see for example \citelist{\cite{Brezis_2011}\cite{Willem_2013}}).
Also, when \(p = 1\),  \(k = 1\) and \(\ell = 0\), the Sobolev inequality \eqref{eq_AhR2ooxi0teixahheePoa5hu} turns out to be equivalent with the classical isoperimetric inequality \citelist{\cite{Federer_Fleming_1960}\cite{Mazya_2011}}; when \(1< p < n\) and \(\ell = 0\), the optimal constant can be computed, with optimizers when \(p = 2\) being entire solutions of Yamabe's problem of prescribed scalar curvature in differential geometry  \citelist{\cite{Aubin_1976}\cite{Talenti_1976}}.

Whereas for scalar functions it is clear that a component of the derivative \(\Deriv u\) cannot be dropped in the integral on the right-hand side of the Sobolev inequality \eqref{eq_AhR2ooxi0teixahheePoa5hu}, one could hope that when \(u\) is taken instead to be a \emph{vector field}, one could replace the total derivative  \(\Deriv^k u\) in the right-hand side by some sparser differential operator \(A(\Deriv) u\).
Here and in the sequel, given finite-dimensional linear spaces \(V\) and \(E\) a linear mapping \(\smash {A \in \Lin (\Lin_{\mathrm{sym}}^k (\Rset^n, V), E)}\), we define the differential operator \(A (\Deriv)\) acting on \(u \in C^\infty (\Rset^n, V)\) at any point \(x \in \Rset^n\) by application of the linear map \(A\) to the total \(k\)--th order derivative \(\Deriv^k u(x) : \Rset^n \to \Lin_{\mathrm{sym}}^k (\Rset^n, V)\), seen as a function to symmetric \(k\)--linear maps from \((\Rset^n)^k\) to \(V\):
\begin{equation*}
A (\Deriv)u (x)
\defeq 
A [\Deriv^k u (x)] \in E\;;
\end{equation*}
in terms of partial derivatives, for every multiindex \(\smash{\alpha = (\alpha_1, \dotsc, \alpha_n) \in \Nset^n}\) satisfying \(\smash {\abs{\alpha} \defeq \alpha_1 + \dotsb + \alpha_n = k}\), there exists some linear map \(A_\alpha \in \Lin (V, E)\) for which for each \(u \in C^\infty (\Rset^n, V)\) and each \(x \in \Rset^n\), we have
\begin{equation}
  \label{eq_Io6oigh0kee2ocaev}
  A (\Deriv) u (x) = \sum_{\substack{\alpha \in \Nset^n\\ \abs{\alpha} = k}} A_\alpha [\partial^\alpha u (x)] = \sum_{\substack{\alpha \in \Nset^n\\ \abs{\alpha} = k}} \partial^\alpha (A_\alpha [u]) (x)\;,
\end{equation}
where \(\partial^\alpha \defeq \partial_1^{\alpha_1} \dotsb \partial_n^{\alpha_n}\) and the second equality follows from the fact that the partial derivative \(\partial^\alpha\) commutes \(A_\alpha\) since the latter does not depend on the variable \(x\).

Given such a differential operator \(A(\Deriv)\), the goal of the present notes is to determine when for every \(u \in C^\infty (\Rset^n, V)\) the vector Sobolev inequality 
\begin{equation}
  \label{eq_bah2acooqueBei4Ahph}
   \brk[\Big]{\int_{\Rset^n} \abs{\Deriv^{\ell} u}^\frac{np}{n - (k - \ell)p}}^{1 - \frac{(k - \ell)p}{n}}
  \le 
  \C
  \int_{\Rset^n} \abs{A (\Deriv)[u]}^p
\end{equation}
holds.
The material covered in this work come essentially from the original references \citelist{\cite{VanSchaftingen_2008}\cite{VanSchaftingen_2013}\cite{Bousquet_VanSchaftingen_2014}};
they are a somehow more informal counterpart part of the recent lectures notes \cite{VanSchaftingen_2023}; they are nicely complemented by the survey article \cite{VanSchaftingen_2014}.

\subsection{\texorpdfstring{\(L^2\)}{L²} estimates}
In the case \(p = 2\), one has by the Parseval identity
\begin{equation}
  \label{eq_mi2eijaefaazae5io1ef7ohM}
\int_{\Rset^n} \abs{A (\Deriv) u}^2 = 
\int_{\Rset^n} \abs{\brk{2 \pi i}^k A (\xi) [\mathcal{F} u(\xi)]}^2  \dif \xi\;,
\end{equation}
where the Fourier transform \(\mathcal{F} u : \Rset^n\to V + i V\) of a Schwartz test function \(u \in \mathcal{S} (\Rset^n, V)\) is defined for every \(\xi \in \Rset^n\) by the integral formula
\begin{equation*}
(\mathcal{F} u) (\xi) 
\defeq 
\int_{\Rset^n} e^{-2\pi i\,\xi\cdot x}  \, u (x) \dif x\;,
\end{equation*}
so that 
\begin{equation}
  \label{eq_du0zoovuociuBi9nu}
\mathcal{F} (A (\Deriv) u) (\xi)
= A \brk[\big]{(2 \pi i\,\xi)^{\otimes k}  \otimes \mathcal{F} u (\xi) }
= \brk{2 \pi i}^k A (\xi)[\mathcal{F} u (\xi)]\;,
\end{equation}
and where for every \(\xi \in \Rset^n\) and \(v \in V\)
\begin{equation}
  \label{eq_deeque4ungie3eijeMooCiof}
  A (\xi) [v] \defeq A (\xi^{\otimes k} \otimes v)
  = \sum_{\substack{\alpha \in \Nset^n\\ \abs{\alpha} = k}} \xi^\alpha A_\alpha
  \in \Lin (V, E)\;
\end{equation}
is the symbol of the differential operator \(A (\Deriv)\) and where we have written \(\xi^\alpha \defeq \xi_1^{\alpha_1} \dotsm \xi_n^{\alpha_n} \in \Rset\)
for \(\alpha =  (\alpha_1, \dotsc, \alpha_n) \in \Nset^n\) and \(\xi= (\xi_1, \dotsc, \xi_n) \in \Rset^n\).

If we assume that the operator \(A (\Deriv)\) is \emph{injectively elliptic}, that is, that for every \(\xi \in \Rset^n\setminus \set{0}\), we have \(\ker A (\xi) = \set{0}\), then since \(\Rset^n\) and \(V\) are finite-dimensional vector space, we get from Weierstrass’s theorem the existence of a constant \(\Cl{cst_johdoov8eegheeH2hur6Apee} \in \intvo{0}{\infty}\) such that for every \(\xi \in \Rset^n\) and \(v \in \Rset^n\),
\begin{equation}
\label{eq_thahnopaeyavueTup7ohL3ai}
 \abs{\xi}^k \abs{v} \le \Cr{cst_johdoov8eegheeH2hur6Apee}
 \abs{A (\xi)[v]}\;.
\end{equation}
Combining the Parseval identity \eqref{eq_mi2eijaefaazae5io1ef7ohM} and the estimate \eqref{eq_thahnopaeyavueTup7ohL3ai}, we get
\begin{equation}
\label{eq_xie4hohngee0ooQuisahtoov}
\begin{split}
 \int_{\Rset^n} \abs{\Deriv^{k} u}^2
  &=   \int_{\Rset^n} \abs{2 \pi i\,\xi}^{2k} \abs{u(\xi)}^2\dif \xi\\
  &\le \Cr{cst_johdoov8eegheeH2hur6Apee}^2
  \int_{\Rset^n} \abs{\brk{2 \pi i}^k A (\xi) [u(\xi)]}^2\dif \xi
  =\Cr{cst_johdoov8eegheeH2hur6Apee}^2
  \int_{\Rset^n}  \abs{A (\Deriv) u}^2\;.
  \end{split}
\end{equation}
Combining \eqref{eq_xie4hohngee0ooQuisahtoov} and the scalar Sobolev inequality \eqref{eq_AhR2ooxi0teixahheePoa5hu}, we get 
\begin{equation}
\label{eq_ThohS6aChuethe7phialarel}
\begin{split}
 \brk[\Big]{\int_{\Rset^n} \abs{\Deriv^{\ell} u}^\frac{2n}{n -2 (k - \ell)}}^{1 - \frac{(k - \ell)2}{n}}
  &\le 
  \Cl{cst_Wa5xoh3iab4aigeth0zeechi}\int_{\Rset^n}  \abs{A (\Deriv) u}^2
\end{split}
\end{equation}
and we have thus obtained the vector Sobolev inequality \eqref{eq_bah2acooqueBei4Ahph} when \(p = 2\) and the differential operator \(A(\Deriv)\) is injectively elliptic.

As a first example of naturally arising injectively elliptic constant coefficient homogeneous vector differential operator, we have the \(\operatorname{div}\)--\(\operatorname{curl}\) defined for each \(u \in C^\infty (\Rset^3, \Rset^3)\) as \(A (\Deriv) u \defeq (\operatorname{div} u, \operatorname{curl} u)\).
By Lagrange's identity for the dot and cross products, we have for every \(\xi \in \Rset^3\) and \(v \in \Rset^3\),
\begin{equation}
  \abs{A (\xi)[v]}^2 
  = \abs{(\xi \cdot v, \xi \times v)}^2
  = \abs{\xi \cdot v}^2 + \abs{\xi \times v}^2
  = \abs{\xi}^2 \abs{v}^2.
\end{equation}
The operator \(A(\Deriv)\) being injectively elliptic, we get as a particular case of \eqref{eq_ThohS6aChuethe7phialarel} for every \(u \in C^\infty_c(\Rset^3, \Rset^3)\) the Hodge--Sobolev estimate 
\begin{equation}
  \brk[\Big]{\int_{\Rset^3} \abs{u}^6}^{\frac{1}{3}}
  \le
  \C \int_{\Rset^n} \abs{\operatorname{div} u}^2 + \abs{\operatorname{curl} u}^2\;.
\end{equation}
More generaly, given \(m \in \set{1,\dotsc, n -1}\), we can consider the \emph{Hodge operator} \(A (\Deriv)\) defined for each \(\smash{u \in C^\infty(\Rset^n, \bigwedge^m \Rset^n)}\) by \(A (\Deriv) u \defeq (\dext u, \dext^* u)\) where \( \smash{\dext u\in C^\infty(\Rset^n, \bigwedge^{m+1} \Rset^n)}\) and \(\smash{\dext^*u\in C^\infty(\Rset^n, \bigwedge^{m-1} \Rset^n)}\) are respectively the exterior differential and codifferential of the differential form \(u\). 
The operator \(A (\Deriv)\) is also injectively elliptic, and we obtain in this case as a consequence of the inequality \eqref{eq_ThohS6aChuethe7phialarel} for every \(u \in C^\infty_c(\Rset^n, \bigwedge^m \Rset^n)\) the estimate
\begin{equation}
 \brk[\Big]{\int_{\Rset^n} \abs{u}^\frac{2n}{n - 2}}^{1 - \frac{2}{n}}
  \le 
  \C\int_{\Rset^n}  \abs{\dext u}^2 + \abs{\dext^* u}^2 \;.
\end{equation}

The \emph{symmetric derivative} \(\Deriv_{\mathrm{sym}}\) appears naturally in linear elasticity and is defined as the pointwise symmetric part of the derivative of a vector field \(u : \Rset^n\to \Rset^n\): \(\Deriv_{\mathrm{sym}} u(x) \defeq (\Deriv u (x) + (\Deriv u(x))^*)/2\).
The differential operator \(A (\Deriv) \defeq \Deriv_{\mathrm{sym}}\) is injectively elliptic. Indeed, one has for every \(\xi \in \Rset^n\) and \(v \in \Rset^n\), 
\begin{equation}
  \abs{A (\xi)[v]}^2
  = \frac{\abs{\xi \otimes v + v \otimes \xi}^2}{4}
  = \frac{\abs{\xi}^2 \abs{v}^2 + \abs{\xi \cdot v}^2}{2},
\end{equation}
which vanishes if and only if either \(\xi = 0\) or \(v = 0\).
As a consequence of \eqref{eq_ThohS6aChuethe7phialarel} we get for every \(u \in C^\infty_c(\Rset^n, \Rset^n)\) the Korn--Sobolev inequality 
\begin{equation}
 \brk[\Big]{\int_{\Rset^n} \abs{u}^\frac{2n}{n - 2}}^{1 - \frac{2}{n}}
  \le 
  \C\int_{\Rset^n}  \abs{\Deriv_{\operatorname{sym}} u}^2 \;.
\end{equation}

\subsection{\texorpdfstring{\(L^p\)}{Lᵖ} estimates}
In order to get the estimate \eqref{eq_ThohS6aChuethe7phialarel} for \(p \ne 2\), we first note that when the differential operator \(A (\Deriv)\) is injectively elliptic, the Fourier transform of \(u \in C^\infty_c (\Rset^n, V)\) can be written for every \(\xi \in \Rset^n \setminus \set{0}\) as 
\begin{equation}
\label{eq_Aqu1shoh0xu8ipesip0thook}
 \mathcal{F} u (\xi)
 = \brk{2 \pi i}^{-k} A (\xi)^\dagger[\brk{2 \pi i}^k A (\xi)[\mathcal{F} u (\xi)]]
 =  \brk{2 \pi i}^{-k} A (\xi)^\dagger[\mathcal{F} (A (\Deriv)u) (\xi)]\;,
\end{equation}
in view of \eqref{eq_du0zoovuociuBi9nu}.
In \eqref{eq_Aqu1shoh0xu8ipesip0thook}, \(A (\xi)^\dagger \in \Lin (E, V)\) denotes for each \(\xi \in \Rset^n \setminus \set{0}\) the Moore--Penrose generalized inverse of \(A (\xi)\), which can be computed as
\begin{equation}
\label{eq_Eib4Oel5Queew9Roeth3la3o}
  A (\xi)^\dagger = \brk[\big]{A (\xi)^* A (\xi)}^{-1} A (\xi)^*
\end{equation}
since \(A (\xi)\) is injective and thus \(A(\xi)^* A (\xi)\) is also;
the generalized inverse \(A (\xi)^\dagger\) satisfies \(A (\xi)^\dagger A (\xi) = \operatorname{id}_V\).
(The alert reader might have noted that the definition of \(A (\xi)^\dagger\) depends on the choice of a Euclidean structure on the linear spaces \(V\) and \(E\); this arbitrary inner product main role is to fix the choice of a left-inverse so that it depends smoothly on \(\xi\); otherwise the precise choice does not matter.)

As a consequence of \eqref{eq_Aqu1shoh0xu8ipesip0thook}, we have for every \(\xi \in \Rset^n \setminus \set{0}\) 
\begin{equation}
 \mathcal{F} (\Deriv^k u) (\xi)
 = \xi^{\otimes k} \otimes A (\xi)^\dagger[\mathcal{F} (A (\Deriv)u) (\xi)]\;.
\end{equation}
Since the mapping \(\xi \in \Rset^n \setminus \set{0} \mapsto A (\xi)^\dagger \in \Lin (E, V)\) is smooth and homogeneous of degree \(0\), 
by a classical multiplier theorem (see for example \cite{Stein_1970}*{ch.\ IV th.\ 3}), we get when \(p \in \intvo{1}{\infty}\) for every \(u \in C^\infty_c (\Rset^n, V)\) the estimate 
\begin{equation} 
\label{eq_chausaewagheich6eabizioS}
\int_{\Rset^n} \abs{\Deriv^{k} u}^p
  \le \C \int_{\Rset^n} \abs{A (\Deriv) u }^p \;,
\end{equation}
which follows directly from \eqref{eq_xie4hohngee0ooQuisahtoov} when \(p = 2\).
Combining the Sobolev inequality \eqref{eq_AhR2ooxi0teixahheePoa5hu} with \eqref{eq_chausaewagheich6eabizioS}, we get for \(p \in \intvo{1}{\frac{n}{k - \ell}}\) the vector Sobolev estimate
\begin{equation}
\label{eq_ohwe9Hojew5xeeMe5eiRoopa}
   \brk[\Big]{\int_{\Rset^n} \abs{\Deriv^{\ell} u}^\frac{np}{n - (k - \ell)p}}^{1 - \frac{(k - \ell)p}{n}}
  \le 
  \C
  \int_{\Rset^n} \abs{A (\Deriv)[u]}^p\;.
\end{equation}
As a consequence of \eqref{eq_ohwe9Hojew5xeeMe5eiRoopa} we get Hodge--Sobolev inequalities: for \(p \in \intvo{1}{3}\) and for every  \(u \in C^\infty_c (\Rset^3, \Rset^3)\) with \(n = 3\),
\begin{equation}
\label{eq_weiyag3iyein4fahF8jeeRei}
 \brk[\Big]{\int_{\Rset^3} \abs{u}^\frac{3p}{3 - p}}^{1 - \frac{p}{3}}
  \le 
  \C\int_{\Rset^3}  \abs{\operatorname{div} u}^p + \abs{\operatorname{curl} u}^p\,;
\end{equation}
and for \(p \in \intvo{1}{n}\) and for every \(u \in C^\infty_c (\Rset^n, \bigwedge^m \Rset^n)\),
\begin{equation}
\label{eq_dai0Tee4ieH1ienohf2eezie}
 \brk[\Big]{\int_{\Rset^n} \abs{u}^\frac{np}{n - p}}^{1 - \frac{p}{n}}
  \le 
  \C\int_{\Rset^n}  \abs{\dext u}^p + \abs{\dext^* u}^p\;,
\end{equation}
and a Korn--Sobolev inequality: for \(p \in \intvo{1}{n}\) and  for every \(u \in C^\infty_c (\Rset^n, \Rset^n)\),
\begin{equation}
\label{eq_aishiedae2quae0siu0Ev6ac}
 \brk[\Big]{\int_{\Rset^n} \abs{u}^\frac{np}{n - p}}^{1 - \frac{p}{n}}
  \le 
  \C\int_{\Rset^n}  \abs{\Deriv_{\mathrm{sym}} u}^p\;.
\end{equation}

\subsection{\texorpdfstring{\(L^1\)}{L¹} non-estimates}
In the endpoint \(p = 1\), the techniques behind the multiplier theorem that lead to \eqref{eq_chausaewagheich6eabizioS} merely yield a weak-type inequality: for every \(u \in C^\infty_c (\Rset^n, V)\) and  for every \(t \in \intvo{0}{\infty}\)
\begin{equation} 
\label{eq_otei6eiNgoh5yei2Bo9zahgi}
\mathcal{L}^n \brk[\big]{\set{x \in \Rset^n \st  \abs{\Deriv^{k} u(x)} \ge t}}
  \le \frac{\C}{t} \int_{\Rset^n} \abs{A (\Deriv) u} \;.
\end{equation}
The estimate \eqref{eq_otei6eiNgoh5yei2Bo9zahgi} is insufficient to get a Sobolev inequality of the form \eqref{eq_bah2acooqueBei4Ahph}. (Actually, the inequality \eqref{eq_chausaewagheich6eabizioS} for \(1 < p < 2\) is proved by Marcinkiewicz interpolation between the estimates \eqref{eq_xie4hohngee0ooQuisahtoov} and \eqref{eq_otei6eiNgoh5yei2Bo9zahgi}.)

The inequality \eqref{eq_otei6eiNgoh5yei2Bo9zahgi} cannot be improved into a strong type inequality. Donald S.\ \familyname{Ornstein} has proved that there is no nontrivial estimate \cite{Ornstein_1962}:
If \(B (\Deriv)\) is another homogeneous constant coefficient differential operator from \(V\) to a linear space \(F\) of order \(k\) and if for every \(u \in C^\infty_c (\Rset^n, V)\) the estimate 
\begin{equation}
\label{eq_Eexiew6iequacahbahgaewei}
\int_{\Rset^n} \abs{B (\Deriv) u} \le \C \int_{\Rset^n} \abs{A (\Deriv)u}\;,
\end{equation}
holds, then \(B (\Deriv) = L A (\Deriv)\), where \(L \in \Lin (E, F)\) is a constant coefficient linear mapping.
In other words, the estimate \eqref{eq_Eexiew6iequacahbahgaewei} can only hold for algebraic reasons, when the derivatives on its left-hand side are linear combinations of those appearing on its right-hand side, and never for analytic ones.
The proof of this non-estimate is done through convex integration; although Ornstein's original paper does not cover explicitly the vector case, it seems to be working in that case and more recent approaches do explicitly \citelist{\cite{Kirchheim_Kristensen_2016}\cite{Kirchheim_Kristensen_2011}} (for nonestimates through convex integration see also \cite{Conti_Faraco_Maggi_2005}). 
Interestingly, Ornstein proved his non-estimate in order to construct a distribution which is not a measure but whose first-order derivatives are of order \(1\), providing a counter-example to another question also raised in Laurent \familyname{Schwartz}'s \emph{Théorie des distributions} \citelist{\cite{Schwartz_1950}\cite{Schwartz_1951}}.

\section{The cancelling condition}

\subsection{A menagerie of inequalities}
\label{section_menagerie}

Even if, as we have seen above, Ornstein's non-estimate prevents us from obtaining a Sobolev inequality of the form \eqref{eq_bah2acooqueBei4Ahph} at the endpoint \(p = 1\), one might still hope to obtain directly such an inequality.

An endpoint Korn--Sobolev inequality generalizing the scalar Sobolev inequality \eqref{eq_aishiedae2quae0siu0Ev6ac} was indeed obtained for each \(u \in C^\infty( \Rset^n, \Rset^n)\) by Monty J. \familyname{Strauss} \cite{Strauss_1973}
\begin{equation}
\label{eq_aeth2Fai8OoSeik2eirukei1}
  \brk*{\int_{\Rset^n} \abs{u}^\frac{n}{n- 1}}^{1 - \frac{1}{n}}
  \le \C \int_{\Rset^n} \abs{\Deriv_{\mathrm{sym}} u}\;,
\end{equation}
with a proof similar to Gagliardo and Nirenberg's original proof of \eqref{eq_AhR2ooxi0teixahheePoa5hu}.

On the the other hand, the endpoint Hodge--Sobolev inequality \eqref{eq_weiyag3iyein4fahF8jeeRei}
\begin{equation}
\label{eq_mie9veikoe6EedaGoot3eaSu}
 \brk[\Big]{\int_{\Rset^3} \abs{u}^\frac{3}{2}}^{\frac{2}{3}}
  \le 
  \Cl{cst_geupheeta4uxahpohK9eHohc}\int_{\Rset^3}  \abs{\operatorname{div} u} + \abs{\operatorname{curl} u}
\end{equation}
\emph{does not hold} for every \(u \in C^\infty_c (\Rset^3, \Rset^3)\). 
In order to see this, one essentially takes \(u\) in \eqref{eq_mie9veikoe6EedaGoot3eaSu} to be a suitable regularization of the function \(x \mapsto x/\abs{x}^3\) whose divergence is a Dirac measure and whose curl vanishes.
Alternatively, taking \(u = \nabla v\), one can see that \eqref{eq_mie9veikoe6EedaGoot3eaSu} would imply 
an estimate for every \(v \in C^\infty_c (\Rset^3, \Rset)\),
\begin{equation}
  \brk[\Big]{\int_{\Rset^3} \abs{\nabla v}^\frac{3}{2}}^{\frac{2}{3}}
  \le 
  \Cr{cst_geupheeta4uxahpohK9eHohc} \int_{\Rset^3}  \abs{\Delta v}\;,
\end{equation}
which does not hold.
Quite surprisingly, Jean \familyname{Bourgain} and Haïm \familyname{Brezis}
have proved that the estimate \eqref{eq_mie9veikoe6EedaGoot3eaSu} holds for every vector field \(u \in C^\infty_c (\Rset^3, \Rset^3)\) for which \(\operatorname{div} u = 0\) \citelist{\cite{Bourgain_Brezis_2004}*{th.\ 2}\cite{Bourgain_Brezis_2007}*{cor.\ 12}}.
Moreover, they proved that if \(m \in \set{2, \dotsc, n - 2}\) --- so that in particular, \(n \ge 4\) --- one has the endpoint Hodge--Sobolev inequality: for every \(u \in C^\infty_c (\Rset^n, \bigwedge^m \Rset^n)\) 
\begin{equation}
\label{eq_phuaSh5ahdango4ohvoochae}
\vspace{-.2ex}
 \brk[\Big]{\int_{\Rset^n} \abs{u}^\frac{n}{n - 1}}^{1 - \frac{1}{n}}
  \le 
  \C\int_{\Rset^n}  \abs{\dext u} + \abs{\dext^* u}\;,
\vspace{-.2ex}
\end{equation}
extending \eqref{eq_dai0Tee4ieH1ienohf2eezie}
(see also \cite{Lanzani_Stein_2005}).

\subsection{Towards cancelling operators}

In order to identify the differential operators \(A (\Deriv)\) for which the Sobolev inequality \eqref{eq_ohwe9Hojew5xeeMe5eiRoopa} holds in the endpoint case \(p = 1\), we examine carefully the counterexample to the Hodge--Sobolev inequality \eqref{eq_mie9veikoe6EedaGoot3eaSu}.
The essential ingredient is the construction of \(u\) such that \(A (\Deriv) u = (\operatorname{div} u, \operatorname{curl} u)\) is a Dirac measure.
In the general case we want to find some  \(u\) such that 
\begin{equation}
\label{eq_ahfigh4ooPoh6eix7ohnii5Y}
 A (\Deriv) u = e \delta_0\;,
\end{equation}
for some fixed vector \(e \in E\).
Passing to the Fourier transform in \eqref{eq_ahfigh4ooPoh6eix7ohnii5Y}, 
we get 
\begin{equation}
\label{eq_Zahgh4Oob1ilaeyoh0shah4N}
 \brk{2 \pi i}^k A (\xi)[\mathcal{F} u (\xi)]= e\;.
\end{equation}
In order for \eqref{eq_Zahgh4Oob1ilaeyoh0shah4N} to have a solution, the differential operator \(A(D)\) and the vector \(e\) should satisfy the condition
\begin{equation}
\label{eq_pheeZahng1kiB9jiega7ohj3}
 e \in \bigcap_{\xi \in \Rset^n \setminus \set{0}} A (\xi)[V];
\end{equation}
in particular, \eqref{eq_ahfigh4ooPoh6eix7ohnii5Y} will not have a solution for some non-trivial \(e\) when the right-hand side of \eqref{eq_pheeZahng1kiB9jiega7ohj3} reduces to \(\set{0}\).
This motivates the following definition \cite{VanSchaftingen_2013}. 
\begin{definition}
\label{definition_cancelling}
The homogeneous differential operator \(A (\Deriv)\) is \emph{cancelling} whenever 
\[
 \bigcap_{\xi \in \Rset^n \setminus \set{0}} A (\xi)[V] = \set{0}\;.
\]
\end{definition}

Thanks to the injective ellipticity of \(A (\Deriv)\), for every \(\xi \in \Rset^n \setminus \set{0}\), \(A (\xi)^\dagger\) is well-defined through \eqref{eq_Eib4Oel5Queew9Roeth3la3o} and the function \(\xi \in \Rset^n \setminus \set{0} \mapsto A (\xi)^\dagger \in \Lin(E, V)\) is homogeneous of degree \(-k\); hence through the extension of the homogeneous distribution \(A^\dagger\) from \(\Rset^n \setminus \set{0}\) to a homogeneous distribution on \(\Rset^n\) \cite{Hormander_1990_I}*{th.\ 3.2.3 \& 3.2.4}, the preservation of the smoothness in the extension and its temperate character \cite{Hormander_1990_I}*{th.\ 7.1.18},
and the homogeneity of the resulting Fourier transform \cite{Hormander_1990_I}*{th.\ 7.1.16}, we get the following construction (see \citelist{\cite{Bousquet_VanSchaftingen_2014}*{lem.\ 2.1}\cite{Raita_2019}} for a proof based on these ingredients and \cite{VanSchaftingen_2023}*{prop.\ 2} for a direct self-contained proof).

\begin{proposition}
\label{proposition_fundamental_solution}
If \(A (\Deriv)\) is injectively elliptic, then there exists \(G_A \in C^\infty (\Rset^n \setminus \set{0}, \Lin (E, V))\) such that 
\begin{enumerate}[label=(\roman*)]
 \item \(\mathcal{F} G_A (\xi)  = \brk*{2 \pi i}^{-k} A (\xi)^\dagger\),
 \item \label{it_Exiemaephu9Ex3teizaiziet}
 for every \(x \in \Rset^n \setminus \set{0}\) and every \(t \in \Rset \setminus \set{0}\), 
    \[
    G_A (t x) = t^{k - n} \brk[\big]{G_A (x) - \ln \abs{t} \, P_A (x)}\;,
    \]
    where the function \(P_A : \Rset^n \to \Lin (E, V)\) is a homogeneous polynomial of degree \(k - n\) when \(k \ge n\) and \(k - n\) is even, and is \(0\) otherwise.
\end{enumerate}
\end{proposition}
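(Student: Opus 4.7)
The strategy is to construct $G_A$ as the inverse Fourier transform of a carefully chosen tempered extension of the symbol $T(\xi) \defeq (2\pi i)^{-k} A(\xi)^\dagger$. First I verify that $T \in C^\infty(\Rset^n \setminus \set{0}, \Lin(E, V))$ and is positively homogeneous of degree $-k$: smoothness follows from the representation \eqref{eq_Eib4Oel5Queew9Roeth3la3o} combined with the injective ellipticity of $A (\Deriv)$, since $A(\xi)^* A(\xi)$ is invertible for $\xi \ne 0$; homogeneity is inherited directly from that of $A(\xi)$.

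The core step is to extend $T$ from $\Rset^n \setminus \set{0}$ to a tempered distribution $\tilde T$ on $\Rset^n$ via Hörmander's extension theorems for homogeneous distributions (\cite{Hormander_1990_I}*{th.\ 3.2.3 and 3.2.4}). When $k < n$, so that $-k \notin \set{-n, -n-1, \dotsc}$, one may take $\tilde T$ positively homogeneous of degree $-k$. When $k \ge n$, no homogeneous extension is available in general, and the best one can do is an extension satisfying a logarithmic scaling defect
\begin{equation*}
  \tilde T(t\xi) = t^{-k}\tilde T(\xi) + t^{-k}\ln t \cdot S(\xi) \quad \text{for } t > 0,
\end{equation*}
where $S$, being a $\Lin(E, V)$-valued homogeneous distribution of degree $-k$ supported at the origin, is a linear combination $S = \sum_{\abs{\alpha} = k - n} c_\alpha \partial^\alpha \delta_0$ with $c_\alpha \in \Lin(E, V)$.

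I then set $G_A \defeq \mathcal{F}^{-1} \tilde T$. Condition (i) is immediate, and smoothness of $G_A$ on $\Rset^n \setminus \set{0}$ follows from Hörmander's theorems on Fourier transforms of homogeneous distributions (\cite{Hormander_1990_I}*{th.\ 7.1.16 and 7.1.18}), which confine the singular support of $G_A$ to the origin. For (ii), intertwining the dilation $\xi \mapsto t\xi$ with $\mathcal{F}^{-1}$ (using $\mathcal{F}^{-1}[\tilde T(t\,\cdot\,)](x) = t^{-n} G_A(x/t)$) converts the scaling identity for $\tilde T$ into the stated scaling for $G_A$, with $P_A \defeq \mathcal{F}^{-1} S$ a homogeneous polynomial of degree $k - n$, since $\mathcal{F}^{-1}$ maps the span of $\set{\partial^\alpha \delta_0 \st \abs{\alpha} = k - n}$ bijectively onto the space of homogeneous polynomials of degree $k - n$. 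The vanishing of $P_A$ when $k - n$ is odd is a parity statement: choosing the extension $\tilde T$ to share the parity $(-1)^k$ of $T$ under $\xi \mapsto -\xi$ constrains the admissible log coefficient $S$, ruling out the components of the wrong parity. The main technical delicacy, and the point I would focus on, is ensuring that the extension is canonical enough for these scaling and parity identities to hold intrinsically and that $G_A$ is genuinely smooth away from the origin; a direct self-contained implementation is carried out in detail in \cite{VanSchaftingen_2023}*{prop.\ 2}.
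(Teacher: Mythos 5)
Your construction follows essentially the same route the paper sketches: regard \((2\pi i)^{-k}A(\xi)^\dagger\) as a smooth, degree-\(-k\) homogeneous function on \(\Rset^n\setminus\set{0}\), extend it to a tempered distribution via Hörmander's Theorems 3.2.3 and 3.2.4, take the inverse Fourier transform, and invoke Theorems 7.1.16 and 7.1.18 for the smoothness and quasi-homogeneity of the result. The paper gives no more detail than this and defers to the same references (\cite{Bousquet_VanSchaftingen_2014}*{lem.\ 2.1}, \cite{Raita_2019}, \cite{VanSchaftingen_2023}*{prop.\ 2}), so the overall plan is correct and is the paper's.

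The step to revisit is your parity argument for the vanishing of \(P_A\). You choose the extension \(\tilde T\) with parity \((-1)^k\) inherited from \(T\), note that the log coefficient \(S\) is a combination of \(\partial^\alpha\delta_0\) with \(\abs{\alpha}=k-n\), hence has parity \((-1)^{k-n}\), and argue that components of the "wrong parity" are ruled out. Pushed to its conclusion, that reasoning forces \(S=0\) precisely when those two parities disagree, that is when \(n\) is odd, not when \(k-n\) is odd. The same condition drops out of the explicit obstruction: the log coefficient is governed by the moments \(\int_{\Sset^{n-1}}\omega^\alpha\,T(\omega)\dif\sigma(\omega)\) with \(\abs{\alpha}=k-n\), whose integrand has overall parity \((-1)^{\abs{\alpha}+k}=(-1)^n\), and hence vanishes exactly when \(n\) is odd. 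A concrete test case is \(A(\Deriv)=\nabla\Delta\) on \(\Rset^2\), so \(n=2\), \(k=3\), \(k-n=1\) odd: a direct computation gives \(G_A(x)=\tfrac{1}{4\pi}\,x\ln\abs{x}\) up to a linear polynomial, with a genuine logarithmic term although \(k-n\) is odd. So the parity bookkeeping you invoke does not on its own produce the condition "\(k-n\) odd" appearing in the statement; what it yields is a condition on the parity of \(n\). You should reconcile this discrepancy — both against the wording of the proposition and against the precise formulation in \cite{VanSchaftingen_2023}*{prop.\ 2}, to which you defer — before treating the parity step as settled.
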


If we assume that the operator \(A (\Deriv)\) is injectively elliptic, then taking some \(e \in E\) and performing a suitable regularization of \(G_A [e] \in C^\infty (\Rset^n \setminus \set{0}, V)\), we get the necessity of the cancellation in the following theorem \cite{VanSchaftingen_2013}*{} (see also \cite{VanSchaftingen_2023}*{th. 9}).

\begin{theorem}
\label{theorem_cancelling_necessary}
If \(A (\Deriv)\) is injectively elliptic, then for every \(u \in C^\infty_c (\Rset^n, V)\),   
\begin{equation}
\label{eq_yiey3Yud2aeB8eimahd0baiy}
\brk[\Big]{\int_{\Rset^n} \abs{\Deriv^{\ell} u}^\frac{n}{n - (k - \ell)}}^{1 - \frac{k - \ell}{n}}
  \le 
  \C
  \int_{\Rset^n} \abs{A (\Deriv)[u]}\;,
\end{equation}
if and only if \(A (\Deriv)\) is cancelling.
\end{theorem}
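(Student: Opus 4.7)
\emph{Overall approach.} The proof splits into two implications of unequal difficulty.

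\emph{Necessity of cancellation.} Suppose $A(\Deriv)$ is not cancelling; pick $e \in \bigcap_{\xi \ne 0} A(\xi)[V]$ with $e \ne 0$. For every $\xi \ne 0$, $A(\xi) A(\xi)^\dagger$ is the orthogonal projection onto the image of $A(\xi)$, so $A(\xi) A(\xi)^\dagger e = e$; combined with the Fourier content of \cref{proposition_fundamental_solution}, this yields the distributional identity $A(\Deriv)(G_A[e]) = e\,\delta_0$ on $\Rset^n$. The plan is to contradict \eqref{eq_yiey3Yud2aeB8eimahd0baiy} via a two-parameter regularization: with $\rho_\lambda$ a mollifier at scale $\lambda \ll 1$ and $\chi_R$ a cutoff equal to $1$ on $B_R$ supported in $B_{2R}$, set
\begin{equation*}
  u_{\lambda, R} \defeq \chi_R \, \brk[\big]{G_A[e] * \rho_\lambda} \in C^\infty_c(\Rset^n, V).
\end{equation*}
Since $G_A$ is homogeneous of degree $k - n$ (up to the polynomial $P_A$) by \cref{proposition_fundamental_solution}, the commutators that arise when $A(\Deriv)$ falls on $\chi_R$ are supported in $B_{2R}\setminus B_R$ with $L^1$ norm $O(1)$ independently of $R$, while the main term $\chi_R\, e \,\rho_\lambda$ has $L^1$ norm at most $\abs{e}$; hence the right-hand side of \eqref{eq_yiey3Yud2aeB8eimahd0baiy} stays bounded as $\lambda \to 0$ and $R \to \infty$. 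On the other hand, for $2\lambda < \abs{x} < R$ one has $u_{\lambda, R}(x) = G_A(x)[e]$ and $\abs{\Deriv^\ell u_{\lambda, R}(x)} \sim \abs{x}^{k - n - \ell}$, so since the critical exponent satisfies $(k-n-\ell) \cdot \frac{n}{n-k+\ell} = -n$,
\begin{equation*}
  \int_{\set{2\lambda < \abs{x} < R}} \abs{\Deriv^\ell u_{\lambda, R}}^{n/(n-k+\ell)} \gtrsim \log \frac{R}{2\lambda} \to \infty
\end{equation*}
as $R/\lambda \to \infty$, contradicting \eqref{eq_yiey3Yud2aeB8eimahd0baiy}. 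The borderline case where the logarithmic term $P_A$ appears in \cref{proposition_fundamental_solution} is accommodated by first subtracting $P_A[e]$.

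\emph{Sufficiency.} Assume $A(\Deriv)$ injectively elliptic and cancelling. From \cref{proposition_fundamental_solution} one has the representation $u = G_A * A(\Deriv)u$ modulo polynomials, hence
\begin{equation*}
  \Deriv^\ell u = K * f, \qquad f \defeq A(\Deriv) u \in L^1(\Rset^n, E),\quad K \defeq \Deriv^\ell G_A,
\end{equation*}
with $K$ homogeneous of degree $k - n - \ell$. For an unconstrained $f \in L^1$ this Riesz-type convolution is only of weak type $(1, n/(n-k+\ell))$, which is exactly why the naive $L^1$ Sobolev estimate fails in the scalar case (cf.\ Ornstein). The gain must come from the fact that $f$ lies in the image of $A(\Deriv)$; under cancellation this image is cut out in $L^1$ by nontrivial annihilating relations on the Fourier side. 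Following Bourgain--Brezis, the plan is to establish a duality inequality of the form
\begin{equation*}
  \abs[\bigg]{\int_{\Rset^n} A(\Deriv) u \cdot \varphi} \le \C \norm{A(\Deriv) u}_{L^1(\Rset^n)} \, \norm{\Deriv \varphi}_{L^n(\Rset^n)}
\end{equation*}
for all $\varphi \in C^\infty_c(\Rset^n, E^*)$, in which the cancelling condition is consumed via an integration-by-parts step that converts the apparent $L^\infty$ loss on $\varphi$ into one extra derivative absorbed by the $L^n$ norm of $\Deriv \varphi$. Once this duality estimate is available, choosing $\varphi$ to be an iterated Riesz potential of an arbitrary $L^{n/(k-\ell)}$ function and invoking Hardy--Littlewood--Sobolev would recover \eqref{eq_yiey3Yud2aeB8eimahd0baiy} by standard duality.

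\emph{Main obstacle.} The technical heart is the duality estimate in the sufficiency direction; everything else is scaling, the representation formula, and classical potential theory. Its proof combines a dyadic or Littlewood--Paley decomposition of $\varphi$ with a careful algebraic construction that uses the cancelling condition to factor $A(\Deriv) u$ at each frequency scale as $L(\Deriv)$ applied to an integrable field, allowing one to integrate by parts and gain the missing derivative.
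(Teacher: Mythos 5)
Your overall architecture matches the paper's: necessity via regularization of $G_A[e]$, sufficiency via a compensated duality estimate tested against $\dot{W}^{1,n}$ fields, and your exponent count giving logarithmic divergence in the annulus is correct (one should also check $\Deriv^\ell G_A[e]\not\equiv 0$, which holds because $G_A[e]$ cannot be a polynomial when $e\ne 0$, since $A(\Deriv)(G_A[e])=e\delta_0$). The genuine divergence from the paper is in how you propose to prove the key duality estimate, \cref{theorem_cocancelling}: you sketch the original Bourgain--Brezis approach through Littlewood--Paley decomposition and an approximation lemma for critical Sobolev functions, whereas the paper gives (in its Section 4) the author's elementary slicing proof --- Fubini over hyperplanes, the Gauss--Ostrogradsky theorem exploiting $L(\Deriv)f=0$ (with $L(\Deriv)$ the compatibility-conditions operator of \cref{proposition_compatibility_conditions}), an interpolation by mollification on each slice, and Morrey's embedding $\dot{W}^{1,n}(\Rset^{n-1})\hookrightarrow C^{0,1/n}(\Rset^{n-1})$. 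Your final step (pairing $\Deriv^\ell u$ with Riesz potentials of $L^{n/(k-\ell)}$ test functions) is an equivalent bookkeeping of the paper's route through $A(\Deriv)u=\operatorname{div}g$ and the multiplier estimate \eqref{eq_veeseeseik4dih9ohwooghiK}, but note that at $\ell=k-1$ the relevant kernel has degree $-n$, so one needs a singular-integral multiplier theorem rather than Hardy--Littlewood--Sobolev. The real gap is that the duality estimate itself --- the technical heart, as you say --- is deferred to a ``careful algebraic construction'' that is never carried out; to turn the sketch into a complete proof you would need to execute either the Littlewood--Paley argument or the paper's slicing one.
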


\Cref{theorem_cancelling_necessary} assumes somehow implicitly very strong boundary conditions on the function \(u\), through the compact support assumption. 
The corresponding theory for weaker boundary conditions introduces additional restrictions on the admissible class of operators and boundary conditions \citelist{\cite{Brezis_VanSchaftingen_2007}\cite{Gmeineder_Raita_2019}\cite{Gmeineder_Raita_VanSchaftingen}
}.

We end this section by noting that the case in which the inequalities in \cref{section_menagerie} corresponding exactly to cancelling operators, illustrating the necessity of the cancellation in \cref{theorem_cancelling_necessary} and supporting the sufficiency of this condition for endpoint Sobolev inequalities.

We begin with the \(\operatorname{div}\)--\(\operatorname{curl}\) operator. 
We have then for every \(\xi \in \Rset^3\) and \(v \in \Rset^3\), 
\(A (\xi)[v] = (\xi \cdot v, \xi \times v)\). 
Hence for every \(\xi \in \Rset^3\),
\begin{equation}
 A (\xi)[\Rset^3] = \set{(\xi \cdot v, \xi \times v) \in \Rset \times \Rset^3 \st v \in \Rset^3 }
 = \Rset \times \set{\xi}^\perp\;,
\end{equation}
so that 
\begin{equation}
 \bigcap_{\xi \in \Rset^3\setminus \set{0}}
 A (\xi)[\Rset^3]
 = 
 \bigcap_{\xi \in \Rset^3\setminus \set{0}}
 \Rset \times \set{\xi}^\perp
 = \Rset \times \set{0}\;,
\end{equation}
which proves that the operator \(A (\Deriv)\) is not cancelling; 
thanks to \cref{theorem_cancelling_necessary} we recover the fact that the endpoint Hodge--Sobolev inequality \eqref{eq_mie9veikoe6EedaGoot3eaSu} does not hold.

If \(m \in \set{2, \dotsc, n - 2}\), the Hodge operator \(A (\Deriv)\) defined for each \(u \in C^\infty (\Rset^n, \bigwedge^m \Rset^n)\) by \(A (\Deriv) u = (\dext u, \dext^* u)\) is cancelling.
Indeed, for every \(\xi \in \Rset^n\) and \(v \in \bigwedge^m \Rset^n\), we have \(A (\xi)[v] = (\xi \wedge v, \xi \lrcorner v) \), and thus for every \(\xi \in \Rset^n\),
\begin{equation}
\begin{split}
 A (\xi)[V]
 &= \set{(\xi \wedge v, \xi \lrcorner v) \in {\textstyle \bigwedge^{m + 1}} \Rset^n \times {\textstyle \bigwedge^{m - 1}} \Rset^n \st v \in   {\textstyle \bigwedge^{m}} \Rset^n}\\
&\subseteq \set[\big] { (w, w_*) \in {\textstyle \bigwedge^{m + 1}} \Rset^n \times {\textstyle \bigwedge^{m - 1}} \Rset^n \st \xi \wedge w = 0 \text{ and } \xi \lrcorner w_* = 0}\;, 
 \end{split}
\end{equation}
so that 
\begin{equation}
\bigcap_{\xi \in \Rset^n \setminus \set{0}}
A (\xi)[V] = 0\;,
\end{equation}
and the operator \(A (\Deriv)\) is cancelling; this could have also been seen as a consequence of \cref{theorem_cancelling_necessary} and of the estimate \eqref{eq_phuaSh5ahdango4ohvoochae}.

Finally, the symmetric derivative operator \(A (\Deriv)\) defined for every \(u \in C^\infty (\Rset^n, \Rset^n)\) by
\(
  A (\Deriv)u \defeq \Deriv_{\mathrm{sym}} u
\) is cancelling if and only if \(n \ge 2\).
Indeed, for every \(\xi, v, w \in \Rset^n\), we have
\[
  A (\xi)[v]:(w \otimes w) = \frac{(\xi \otimes v + v \otimes \xi):(w \otimes w)}{2}= (\xi \cdot w) (v\cdot w)\;.
\]
Therefore,
\[
 A (\xi)[\Rset^n]
 \subseteq \set{e \in \Lin^2_{\mathrm{sym}} (\Rset^n, \Rset) \st \text{for every \(w \in \xi^\perp\), } e:(w \otimes w) = 0},
\]
and hence 
\[
   \bigcap_{\xi \in \Rset^n \setminus \set{0}}
    A (\xi)[\Rset^n]
    = \set{0}\;,
\]
so that the operator \(A (\Deriv)\) is cancelling, which is consistent with \cref{theorem_cancelling_necessary} and the Korn--Sobolev inequality \eqref{eq_aeth2Fai8OoSeik2eirukei1}.

\section{Compatibility conditions and the duality estimate}

\subsection{Higher integrability through duality estimates}
In order to get \eqref{eq_yiey3Yud2aeB8eimahd0baiy}, besides using the Sobolev inequality \eqref{eq_AhR2ooxi0teixahheePoa5hu}, we could also control an \(L^q\)--norm of \(\Deriv^{\ell} u\) through a multiplier theorem. 
Indeed, if we have 
\begin{equation}
\label{eq_loogiez3ic8Vethohph4ieha}
 A (\Deriv) u = B (\Deriv) g,
\end{equation}
for some differential operator \(B (\Deriv)\) of order \(k - \ell\) on \(\Rset^n\) from \(F\) to \(E\) 
and some function \(g \in C^\infty (\Rset^n, F)\), then we can rewrite \eqref{eq_loogiez3ic8Vethohph4ieha} through a Fourier transform in the frequency domain as 
\begin{equation}
 \brk{2 \pi i}^k A (\xi)[\mathcal{F}u (\xi)] = \brk{2 \pi i}^{k - \ell} B (\xi)[\mathcal{F} g (\xi)]
\end{equation}
and thus 
\begin{equation}
 \mathcal{F} (\Deriv^{\ell} u) (\xi)
 = 
 \xi^{\otimes \ell} A(\xi)^\dagger [B (\xi)[\mathcal{F}g (\xi)]];
\end{equation}
since the map \(\xi \in \Rset^n \setminus \set{0} \to\xi^{\otimes \ell} A(\xi)^\dagger B (\xi) \in \Lin(F, \Lin^{\ell}_{\mathrm{sym}} (\Rset^n, V))\) is smooth and homogeneous of degree \(0\), by the same classical  multiplier theorem as we used for \eqref{eq_chausaewagheich6eabizioS}
(see for example \cite{Stein_1970}*{ch.\ IV th.\ 3}), we get when \(q \in \intvo{1}{\infty}\) the estimate 
\begin{equation} 
\label{eq_veeseeseik4dih9ohwooghiK}
\int_{\Rset^n} \abs{\Deriv^{\ell} u}^q
  \le \C \int_{\Rset^n} \abs{g}^q \;.
\end{equation}
So one way to get the estimate \eqref{eq_bah2acooqueBei4Ahph}, would be to construct a function \(g : \Rset^n \to F\) that satisfies the identity \eqref{eq_loogiez3ic8Vethohph4ieha} and an estimate
\begin{equation}
\label{eq_zei5ovahy1Raiposh6kauhae}
  \brk[\Big]{\int_{\Rset^n} \abs{g}^\frac{n}{n - (k - \ell)}}^{1 - \frac{k - \ell}{n}} \le \C 
  \int_{\Rset^n} \abs{A (\Deriv) u}\;
\end{equation}
and apply then the inequality \eqref{eq_veeseeseik4dih9ohwooghiK} with \(q = \frac{n}{n - (k - \ell)}\).

By duality, the existence of \(g\) satisfying estimate \eqref{eq_zei5ovahy1Raiposh6kauhae} with \eqref{eq_loogiez3ic8Vethohph4ieha} is equivalent to have for every \(\varphi \in C^\infty_c (\Rset^n, E)\)
\begin{equation}
\label{eq_Oosheequoom7eege3thou8Ee}
  \abs[\Big]{\int_{\Rset^n} (A(\Deriv) u) \cdot \varphi\,}
  \le 
  \C 
  \int_{\Rset^n} \abs{A (\Deriv) u} \,\brk[\Big]{\int_{\Rset^n} \abs{\Deriv^{k - \ell} \varphi}^\frac{n}{k - \ell} }^\frac{k - \ell}{n};
\end{equation}
One cannot hope in general to have the estimate
\begin{equation}
\label{eq_Ii3ieng9maiyeerohyuigaeP}
  \abs[\Big]{\int_{\Rset^n} f \cdot \varphi\,}
  \le 
  \Cl{cst_Ahph5oodie7eiPh5eib3iwa8}
  \int_{\Rset^n} \abs{f} \, \brk[\Big]{\int_{\Rset^n} \abs{\Deriv^{k - \ell} \varphi}^n }^\frac{k - \ell}{n},
\end{equation}
because taking \(f\) to be an approximation of the identity in \eqref{eq_Ii3ieng9maiyeerohyuigaeP} would yield the Sobolev-type estimate 
\begin{equation}
 \abs{\varphi (x)}\le \Cr{cst_Ahph5oodie7eiPh5eib3iwa8} \brk[\Big]{\int_{\Rset^n} \abs{\Deriv^{k - \ell} \varphi}^\frac{n}{k - \ell} }^\frac{k - \ell}{n},
\end{equation}
which is well-known to fail when \(0 < k - \ell < n\).

\subsection{Compatibility conditions}

The failure of the estimate \eqref{eq_Ii3ieng9maiyeerohyuigaeP} does not obliterate any hope of obtaining \eqref{eq_Oosheequoom7eege3thou8Ee}.
Indeed, the differential operator \(A (\Deriv)\) prescribes some structure to the function \(A(\Deriv) u\).
Such constraints are well-known for the derivative operator, whose image are curl-free vector fields: for \(u \in C^\infty (\Rset^n, \Rset)\), one has indeed for every \(i, j \in \set{1, \dotsc, n}\),
\begin{equation}
\label{eq_Re8quae2rii8vuom8kai2Ba3}
  \partial_j (\partial_i u) = \partial_i (\partial_j u)\;.
\end{equation}
Similarly for the \emph{Hodge complex,} if \(m \in \set{2, \dotsc, n - 2}\) we have for every differential form \(u \in C^\infty (\Rset^n, \bigwedge^m \Rset^n)\)
\begin{align}
\label{eq_eigoiSuthah1Phootee8uad1}
 \dext(\dext u) &= 0&
 &\text{ and }&
 \dext^*(\dext^* u) & = 0\;.
\end{align}
For the symmetric derivative \(\Deriv_{\mathrm{sym}}\) one has more elaborate classical \emph{Saint-Venant compatibility conditions} (see \citelist{\cite{Timoshenko_Goodier_1951}*{ch.\ 9}\cite{Ciarlet_2013}*{\S 6.18}}): for every \(u \in C^\infty (\Rset^n, \Rset^n)\)  and every  \(i, j, k, \ell \in \set{1, \dotsc, n}\), one has
\begin{equation}
\label{eq_Xah5quaejieRahche9roova8}
 \partial_{k\ell} (\partial_i u^j +\partial_j u^i)
 +\partial_{ij} (\partial_k u^\ell +\partial_\ell u^k)
 = \partial_{kj} (\partial_i u^\ell +\partial_\ell u^i)
 +\partial_{i\ell} (\partial_k u^j +\partial_j u^k)\;.
\end{equation}

In fact, if the differential operator \(A(\Deriv)\) is injectively elliptic, one can always construct a differential operator \(L(\Deriv)\) that gives the associated \emph{compatibility conditions}.
Indeed, setting 
\begin{equation}
\label{eq_xooch1Ocui9jechohnaeFo2I}
 L (\xi) \defeq \det \bigl(A (\xi)^*A (\xi)\bigr)\,
 \brk[\big]{\operatorname{id}_E\, -\, A (\xi) (A (\xi)^* A (\xi))^{-1} A (\xi)^*}\;,
\end{equation}
we observe that for every \(\xi \in \Rset^n \setminus \set{0}\),
\[
\ker L (\xi) = \set[\big]{ e \in E \st A (\xi)  (A (\xi)^* A (\xi))^{-1} A (\xi)^* [e] = e}
= A (\xi)[V]\;;
\]
moreover \(A (\xi)^*  A (\xi)\) is a polynomial in \(\xi\), so that 
\(\det (A (\xi)^*  A (\xi)) (A (\xi)^*  A (\xi))^{-1}\) is also a polynomial in \(\xi\) and thus \(L (\xi)\) is a polynomial in \(\xi\); we have thus proved the following proposition.

\begin{proposition}
 \label{proposition_compatibility_conditions}
If \(A (\Deriv)\) is injectively elliptic, then there exists a homogeneous differential operator \(L (\Deriv)\)
from \(E\) to \(F\) on \(\Rset^n\) such that for every \(\xi \in \Rset^n \setminus \set{0}\),
\begin{equation}
\label{eq_phicee9aad0ugh2thaeTueph}
  A (\xi)[V] = \ker L (\xi)\;.
\end{equation}
\end{proposition}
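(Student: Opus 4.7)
The plan is to follow the explicit formula already suggested in the paragraph preceding the statement, namely
\[
  L (\xi) \defeq \det \bigl(A (\xi)^*A (\xi)\bigr)\,
  \brk[\big]{\operatorname{id}_E - A (\xi)\,(A (\xi)^* A (\xi))^{-1} A (\xi)^*}\;,
\]
with target space \(F \defeq E\), and to verify that \(L\) is polynomial in \(\xi\), homogeneous, and has the right kernel on \(\Rset^n \setminus \set{0}\). Thus the whole task decomposes into three independent checks.

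First I would verify that \(\xi \mapsto L(\xi)\) is a polynomial, so that \(L(\Deriv)\) really is a constant-coefficient differential operator. Since the entries of \(A (\xi)\) are homogeneous polynomials of degree \(k\) in \(\xi\), so are those of \(M(\xi) \defeq A(\xi)^* A(\xi)\), of degree \(2k\). The classical cofactor identity \(\det(M)\, M^{-1} = \operatorname{adj}(M)\) — valid wherever \(M\) is invertible and giving a polynomial expression in the entries of \(M\) — shows that \(\det(M(\xi))\,M(\xi)^{-1}\), a priori defined only for \(\xi \ne 0\) by injective ellipticity, extends to a polynomial in \(\xi \in \Rset^n\). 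Sandwiching between \(A(\xi)\) and \(A(\xi)^*\) and subtracting from the scalar \(\det(M(\xi)) \operatorname{id}_E\) then yields a polynomial \(L(\xi)\). A quick degree count (\(k + 0 + k = 2k\) for the projection term, versus \(2k\dim V\) for the scalar factor) shows that \(L(\xi)\) is in fact homogeneous of degree \(2k \dim V\) — which is where the factor \(\det(M(\xi))\) is needed, to balance degrees and to clear the denominator.

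Next I would identify the kernel of \(L(\xi)\) for \(\xi \ne 0\). By injective ellipticity \(M(\xi) = A(\xi)^* A(\xi)\) is invertible, hence \(\Pi(\xi) \defeq A(\xi)\,(A(\xi)^* A(\xi))^{-1} A(\xi)^*\) satisfies \(\Pi(\xi)^2 = \Pi(\xi)\) and \(\Pi(\xi)^* = \Pi(\xi)\); moreover its range is exactly \(A(\xi)[V]\) (since \(A(\xi)^\dagger A(\xi) = \operatorname{id}_V\), the image of \(\Pi(\xi)\) contains \(A(\xi)[V]\), and the converse inclusion is obvious). Thus \(\Pi(\xi)\) is the orthogonal projection of \(E\) onto \(A(\xi)[V]\), and \(\operatorname{id}_E - \Pi(\xi)\) is the orthogonal projection onto the complementary subspace. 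Its kernel is therefore \(A(\xi)[V]\). Since \(\det(M(\xi)) \ne 0\) for \(\xi \ne 0\), multiplication by this nonzero scalar does not alter the kernel, so
\[
  \ker L(\xi) = \ker\brk[\big]{\operatorname{id}_E - \Pi(\xi)} = A(\xi)[V]\;,
\]
as required.

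No step is genuinely an obstacle: the algebraic identity from the adjugate is what makes the construction work, and the auxiliary Euclidean structures used to form adjoints only serve to fix a left-inverse depending polynomially on \(\xi\), as already noted in the text. The whole proposition is therefore a direct algebraic consequence of injective ellipticity together with the cofactor formula.
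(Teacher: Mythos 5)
Your proof is correct and follows the same route as the paper: defining \(L(\xi)\) by the explicit formula \eqref{eq_xooch1Ocui9jechohnaeFo2I}, using the adjugate identity to show polynomiality, and identifying \(\ker L(\xi)\) with the range of the orthogonal projection \(\Pi(\xi)\) onto \(A(\xi)[V]\). The only addition you make is the explicit degree count verifying homogeneity, which the paper leaves implicit.
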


It should be noted that the order of the operator given by the identity \eqref{eq_xooch1Ocui9jechohnaeFo2I} is far from minimal, and is much higher than the one appearing in our specific examples \eqref{eq_Re8quae2rii8vuom8kai2Ba3}, \eqref{eq_eigoiSuthah1Phootee8uad1} and \eqref{eq_Xah5quaejieRahche9roova8}.

\subsection{Bourgain--Brezis duality estimate}

Motivated by the construction of an injectively elliptic operator's compatibility conditions in \cref{proposition_compatibility_conditions} and by the definition of cancelling operator in \cref{definition_cancelling}.

\begin{definition}
\label{definition_cocancelling}
The homogeneous differential operator \(L (\Deriv)\) is \emph{cocancelling} whenever 
\[
 \bigcap_{\xi \in \Rset^n \setminus \set{0}} \ker L (\xi) = \set{0}\;.
\]
\end{definition}

In particular, if the differential operator \(A (\Deriv)\) is injectively elliptic and if \(L (\Deriv)\) is the compatibility conditions differential operator given by \cref{proposition_compatibility_conditions}, then \(A (\Deriv)\) is cancelling if and only if \(L (\Deriv)\) is cocancelling.

The cocancellation condition is similar to conditions characterizing the dimension of measures lying in the kernel of differential operators \citelist{\cite{Roginskaya_Wojciechowski_2006}\cite{ArroyoRabasa_DePhilippis_Hirsch_Rindler_2019}\cite{DePhilippis_Rindler_2016}\cite{Arroyo_Rabasa_2020}}.

We are now going to prove that if \eqref{eq_Ii3ieng9maiyeerohyuigaeP} holds for every \(f \in C^\infty_c (\Rset^n, E)\), then \(L(\Deriv)\) is cocancelling.
Indeed, let \(e \in \bigcap_{\xi \in \Rset^n \setminus \set{0}} \ker L (\xi)\). For every \(g \in C^\infty_c (\Rset^n, \Rset)\), we have that 
\begin{equation}
 \mathcal{F} (L (\Deriv)(g e)) (\xi)
 =  \brk{2 \pi i}^m L(\xi)[e \mathcal{F} g (\xi)] = 0\;,
\end{equation}
where \(m \in \Nset \setminus \set{0}\) is the order of the differential operator \(L(\Deriv)\).
Hence, for every \(\varphi \in C^\infty_c (\Rset^n, \Rset)\), we have \(L(D)(ge)=0\) and thus by \eqref{eq_Ii3ieng9maiyeerohyuigaeP}
\begin{equation}
\begin{split}
 \abs{e}^2 \abs[\Big]{\int_{\Rset^n} g \varphi\,}
 &= \abs[\Big]{\int_{\Rset^n} (g e) \cdot (\varphi e)\,}\\
 &\le \Cl{cst_Da5aerah2keeroNa3Ohy3eiv} \int_{\Rset^n} \abs{g e}\,
 \brk[\Big]{\int_{\Rset^n} \abs{\Deriv (\varphi e) }^n }^{\frac{1}{n}} =\Cr{cst_Da5aerah2keeroNa3Ohy3eiv} \abs{e}^2 \int_{\Rset^n} \abs{g}\,
 \brk[\Big]{\int_{\Rset^n} \abs{\Deriv \varphi }^n }^{\frac{1}{n}}\;,
\end{split}
\end{equation} 
which cannot hold unless \(e = 0\).
This proves the necessity of the cocancellation in the following theorem due to the author \cite{VanSchaftingen_2013} (see also \citelist{\cite{Bourgain_Brezis_2004}\cite{Bourgain_Brezis_2007}\cite{VanSchaftingen_2004_div}\cite{VanSchaftingen_2004_ARB}\cite{VanSchaftingen_2008}\cite{VanSchaftingen_2013}})

\begin{theorem}
\label{theorem_cocancelling}
There exists a constant \(\Cl{cst_yohVaemeiwuphojoh3ohm3ga} \in \intvo{0}{\infty}\) such that
for every \(f \in C^\infty_c (\Rset^n, E)\) satisfying \(L (\Deriv) f = 0\) on \(\Rset^n\) and every \(\varphi \in C^\infty_c (\Rset^n, E)\), 
\begin{equation}
  \label{eq_koo4lui9Eipo7XeenaeghaoR}
  \abs[\Big]{\int_{\Rset^n} f \cdot \varphi\,}
  \le 
  \Cr{cst_yohVaemeiwuphojoh3ohm3ga} \int_{\Rset^n} \abs{f}\; \brk[\Big]{\int_{\Rset^n} \abs{\Deriv \varphi}^n}^\frac{1}{n}
\end{equation}
if and only if the operator \(L (\Deriv)\) is cocancelling.
\end{theorem}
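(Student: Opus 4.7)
\emph{Proof proposal.} The necessity is established in the discussion preceding the statement. To prove sufficiency, assume $L(\Deriv)$ is cocancelling and derive the duality estimate \eqref{eq_koo4lui9Eipo7XeenaeghaoR} for every $f \in C^\infty_c (\Rset^n, E)$ satisfying $L (\Deriv) f = 0$ and every $\varphi \in C^\infty_c (\Rset^n, E)$.

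The strategy is a Fourier-analytic decomposition that exploits the constraint. In Fourier variables, $L (\Deriv) f = 0$ becomes $\mathcal{F} f (\xi) \in \ker L (\xi)$ for every $\xi \in \Rset^n \setminus \set{0}$. Let $\pi (\xi) \in \Lin (E, E)$ denote the orthogonal projection onto $\ker L (\xi)$, defined on the open set where its rank is locally constant; then $\pi (\xi) \mathcal{F} f (\xi) = \mathcal{F} f (\xi)$, so by self-adjointness of $\pi (\xi)$ and Parseval duality,
\begin{equation*}
\int_{\Rset^n} f \cdot \varphi = \int_{\Rset^n} \mathcal{F} f (\xi) \cdot \overline{\pi (\xi) \mathcal{F} \varphi (\xi)} \dif \xi\;.
\end{equation*}
The plan is therefore to replace $\varphi$ by the Fourier multiplier image $\widetilde{\varphi}$ with $\mathcal{F} \widetilde{\varphi} = \pi \, \mathcal{F} \varphi$, and to estimate $\widetilde{\varphi}$ in $L^\infty$ by the $L^n$-norm of $\Deriv \varphi$.

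Cocancellation is used to produce that $L^\infty$ control as follows. By cocancellation and compactness of $S^{n - 1}$, select directions $\xi_1, \dotsc, \xi_N \in S^{n - 1}$ with $\bigcap_{j = 1}^N \ker L (\xi_j) = \set{0}$. Cover $S^{n - 1}$ by small open angular neighborhoods $\Sigma_j$ of $\xi_j$ and let $\sum_j \chi_j = 1$ be a smooth homogeneous partition of unity on $\Rset^n \setminus \set{0}$ subordinate to the conical extensions of the $\Sigma_j$. Split $\widetilde{\varphi} = \sum_j \widetilde{\varphi}_j$ with $\mathcal{F} \widetilde{\varphi}_j = \chi_j \, \pi \, \mathcal{F} \varphi$. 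Within the $j$-th cone, $\pi (\xi)$ is close to the fixed projection $\pi_j \defeq \pi (\xi_j)$ onto the proper subspace $\ker L (\xi_j)$ of $E$. Further Littlewood--Paley localize each $\widetilde{\varphi}_j$ in the radial frequency and apply an anisotropic Bernstein inequality adapted to the cone $\Sigma_j$; the trivial intersection $\bigcap_j \operatorname{im} \pi_j = \set{0}$ provides the cancellation needed to combine the $N$ scale-by-scale estimates into a summable $L^\infty$ bound.

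\textbf{Main obstacle.} The critical difficulty is that $W^{1, n} (\Rset^n)$ does not embed in $L^\infty (\Rset^n)$, so a naive Littlewood--Paley estimate of each dyadic piece $\widetilde{\varphi}_k$ in $L^\infty$ by $(\int \abs{\Deriv \varphi}^n)^{1/n}$ yields a series over scales that fails to sum. The genuine gain must come from combining the directional Fourier cut-off with cocancellation: a Bourgain--Brezis type approximation lemma should produce, for each scale, a representative of $\widetilde{\varphi}_k$ lying in the image of $\pi_j$ whose defect in $L^\infty$ is summable over scales. Constructing this approximation in the full generality of cocancelling operators---without recourse to the divergence-free special structure---is the technical heart of the argument, and the step most likely to present serious difficulties.
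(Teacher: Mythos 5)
Your proposal takes a route that is genuinely different from the one in the paper, and — as you yourself flag at the end — it is not a proof: the step you call the ``technical heart'' (producing, from cocancellation, a scale-by-scale $L^\infty$ control that sums) is left entirely unconstructed. This is not a minor gap that one can wave through. It is precisely the Bourgain--Brezis approximation lemma that the paper cites as the core of \emph{their} original proof, and the paper explicitly notes that extending it beyond the divergence (and the related special operators they handled) to arbitrary cocancelling operators requires nontrivial algebraic work from \cite{VanSchaftingen_2008} and \cite{VanSchaftingen_2013}. Your sketch also glosses over a real obstruction at the Fourier side: the orthogonal projection $\pi(\xi)$ onto $\ker L(\xi)$ need not have constant rank on $\Rset^n\setminus\set{0}$, so it can fail to be continuous, and the multiplier $\chi_j\,\pi$ that you propose to act on $\varphi$ is not a priori a bounded operator on any useful space. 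You acknowledge the locally-constant-rank caveat, but that caveat already kills the decomposition in the generality claimed.

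The paper's own proof avoids all of this and is purely real-variable. For $L(\Deriv) = \operatorname{div}$, it slices $\Rset^n$ by hyperplanes $\set{x_n = \text{const}}$ via Fubini, and estimates the slice pairing $\int_{\Rset^{n-1}} f(\cdot,x_n)\cdot e_n\,\psi$ two ways: directly by $\norm{\psi}_{L^\infty}\int_{\text{slice}}\abs{f}$, and, using $\operatorname{div} f = 0$ and the divergence theorem over a half-space, by $\norm{\Deriv\psi}_{L^\infty}\int_{\Rset^n}\abs{f}$. Interpolating these two bounds on a mollification $\psi_\lambda$ in the parameter $\lambda$, and invoking the Morrey--Sobolev embedding $\dot W^{1,n}(\Rset^{n-1}) \hookrightarrow C^{0,1/n}(\Rset^{n-1})$, yields the slice estimate; a Hölder inequality in $x_n$ then closes the argument. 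For general cocancelling $L(\Deriv)$ of order $m$, the divergence theorem is replaced by $m$ integrations by parts against polynomial weights $\theta_m(\cdot,x_n+t) = t^{m-1}/(m-1)!$ and a $t$-dependent mollification $\Psi$, which produces a control by $\norm{\Deriv\psi}_{L^\infty}\int\abs{f}$ for each fixed direction $\nu$; the cocancellation enters exactly once at the very end, through the algebraic identity $\operatorname{id}_E = \sum_i Q_i\,L(\nu_i)$, which reduces $\int f\cdot\varphi$ to a finite sum of such directional slice estimates. No Fourier multipliers, no Littlewood--Paley decomposition, no approximation lemma are used. If you want to pursue your route you will have to actually prove the cone-localized Bernstein/approximation estimate and deal with the nonsmoothness of $\pi(\xi)$; the elementary slicing argument circumvents both difficulties.
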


\Cref{theorem_cocancelling} and Sobolev’s embedding imply \eqref{eq_Ii3ieng9maiyeerohyuigaeP} and sufficiency in \cref{theorem_cancelling_necessary}.

\Cref{theorem_cocancelling}  can be seen as a weaker replacement for the missing embedding of the critical Sobolev space \(W^{1, n} (\Rset^n)\) into \(L^\infty (\Rset^n)\).
Although many properties of \(W^{1, n} (\Rset^n)\) can be captured by its embedding into the space of functions of vanishing mean oscillation \(\mathrm{BMO} (\Rset^n)\), the estimate \eqref{eq_koo4lui9Eipo7XeenaeghaoR} can be shown to capture some stronger property of the space \(W^{1, n} (\Rset^n)\) \cite{VanSchaftingen_2006} (see also \cite{VanSchaftingen_2023}*{\S 5.1}).

Before going to a proof of the necessity, we first discuss some particular cases of \cref{theorem_cocancelling}. 

As a first example, we have the \(\operatorname{curl}\) operator acting on vector fields \(f : \Rset^n\to \Rset^n\) as  \(L (\Deriv) \defeq \operatorname{curl} f = D f - (Df)^*\).
We have for every \(\xi \in \Rset^n \setminus \set{0}\),
\begin{equation}
  \ker L (\xi) = 
  \set{e \in \Rset^n \st \xi \otimes e = e \otimes \xi}
  = \Rset \xi\;,
\end{equation}
and thus when \(n \ge 2\),
\begin{equation}
 \bigcap_{\xi \in \Rset^n \setminus \set{0}}\ker L (\xi)
 = \bigcap_{\xi \in \Rset^n \setminus \set{0}}\Rset \xi
 = \set{0}\;.
\end{equation}
In this case, the estimate \eqref{eq_koo4lui9Eipo7XeenaeghaoR} is essentially a dual formulation of the endpoint Sobolev inequality \eqref{eq_AhR2ooxi0teixahheePoa5hu}. 
Indeed, if \(\operatorname{curl} f = 0\), then \(f = \nabla u\) and thus
\begin{equation}
 \int_{\Rset^n} f \cdot \varphi
 = \int_{\Rset^n} \nabla u \cdot \varphi
 = - \int_{\Rset^n} u \operatorname{div} \varphi\;,
\end{equation}
so that by Hölder's inequality and the Sobolev inequality \eqref{eq_AhR2ooxi0teixahheePoa5hu} in the endpoint \(p = 1\)
\begin{equation}
\begin{split}
 \abs[\Big]{\int_{\Rset^n} f \cdot \varphi\,}
 &\le \brk[\Big]{\int_{\Rset^n} \abs{u}^\frac{n}{n - 1}}^{1 - \frac{1}{n}}
 \brk[\Big]{\int_{\Rset^n} \abs{\operatorname{div} \varphi}^n}^\frac{1}{n}\\
 &\le 
 \Cl{cst_uus5ohzauvukiy8meuh1aeGh} \brk[\Big]{\int_{\Rset^n} \abs{f}}
 \brk[\Big]{\int_{\Rset^n} \abs{\Deriv \varphi}^n}^\frac{1}{n}\;.
\end{split}
\end{equation}

A second important case is when \(L (\Deriv)\) is the divergence operator. 
Indeed, we have then for every \(\xi \in \Rset^n\),
\begin{equation*}
 \ker L (\xi) = \set{e \in \Rset^n \st \xi \cdot e = 0}
 = \set{\xi}^\perp\;,
\end{equation*}
and thus when \(n \ge 2\)
\begin{equation*}
 \bigcap_{\xi \in \Rset^n \setminus \set{0}}
 \ker L (\xi) 
 = \bigcap_{\xi \in \Rset^n \setminus \set{0}}
 \set{\xi}^\perp
  =\set{0}\;.
\end{equation*}
Although when \(n =2\) curl-free and divergence-free vector fields are equivalent (up to an isometry in their target space), this is not anymore the case when \(n \ge 3\), and the inequality \eqref{eq_koo4lui9Eipo7XeenaeghaoR} is then stronger than the Sobolev inequality \eqref{eq_AhR2ooxi0teixahheePoa5hu} at the endpoint \(p = 1\).

We mentioned above that the Sobolev inequality \eqref{eq_AhR2ooxi0teixahheePoa5hu} at the endpoint \(p = 1\) was in some sense equivalent to the classical isoperimetric inequality.
It turns out that the estimate \eqref{eq_koo4lui9Eipo7XeenaeghaoR} for divergence-free vector fields also has some geometric flavour. 
By a classical approximation argument, \eqref{eq_koo4lui9Eipo7XeenaeghaoR} holds also for divergence-free measures and thus in particular for circulation integrals along closed curves. One thus reaches that if \(\Gamma\) is closed curve with tangent vector \(t\) and length \(\abs{\Gamma}\)
\begin{equation}
\label{eq_tughiet3laeGhei6chiequim}
 \abs[\Big]{\int_{\Gamma} \varphi \cdot t}
 \le \C \abs{\Gamma} \,\brk[\Big]{\int_{\Rset^n} \abs{\Deriv \varphi}^n}^\frac{1}{n}\;;
\end{equation}
the estimate on circulation integrals \eqref{eq_tughiet3laeGhei6chiequim} was obtained by Jean \familyname{Bourgain}, Haïm \familyname{Brezis} and Petru \familyname{Mironescu} \cite{Bourgain_Brezis_Mironescu_2004} (see also \cite{VanSchaftingen_2004_circ}).
Whereas when \(n = 2\), \eqref{eq_tughiet3laeGhei6chiequim} can be obtained as a direct consequence of Green's theorem, the isoperimetric inequality and the Cauchy--Schwarz inequality, with an optimal constant, this is not anymore the case in higher dimensions \(n \ge 3\).
In view of the highly geometric character of \eqref{eq_tughiet3laeGhei6chiequim}, there are several interesting open questions on the optimal constant and optimizers for \eqref{eq_tughiet3laeGhei6chiequim} when \(n \ge 3\) \cite{Brezis_VanSchaftingen_2008}.
Using a result of Stanislav K.\ \familyname{Smirnov} on the decomposition of divergence-free vector fields into elementary solenoids \cite{Smirnov_1993}, one can deduce \eqref{eq_koo4lui9Eipo7XeenaeghaoR} for divergence-free vector fields from \eqref{eq_tughiet3laeGhei6chiequim}, with the same constant \cite{Bourgain_Brezis_2004}.

The essential ingredient of Jean \familyname{Bourgain} and Haïm \familyname{Brezis}’s original proof of the estimate of \cref{theorem_cocancelling} is an approximation property for critical Sobolev functions: For every \(\varepsilon \in \intvo{0}{+\infty}\), they show the existence of a constant \(M_\varepsilon \in \intvo{0}{\infty}\) such that for every function \(u \in \dot{W}^{1, n} (\Rset^n, \Rset)\), they can construct through a Littlewood--Paley decomposition a function \(\smash v \in (\dot{W}^{1, n} \cap L^\infty)(\Rset^n, \Rset)\) satisfying the estimates
\begin{equation*}
  \norm{\Deriv v}_{L^n (\Rset^n)} + \norm{v}_{L^\infty (\Rset^n)} \le M_\varepsilon \norm{\Deriv u}_{L^n(\Rset^n)}
\end{equation*}
and 
\begin{equation*}
  \norm{\Deriv' u - \Deriv'v}_{L^n (\Rset^n)} \le \varepsilon \norm{\Deriv u}_{L^n (\Rset^n)}\;,
\end{equation*}
where \(\Deriv'\) denotes the derivative with respect to the \(n -1\) first variables.
This approximation result generalized a previous approximation result that they had obtained in their work on the divergence equation \citelist{\cite{Bourgain_Brezis_2002}\cite{Bourgain_Brezis_2003}}; the original version covered the divergence operator \cite{Bourgain_Brezis_2007}*{th.\ 1$'$} and the operator \(L (\Deriv)[v_1, \dotsc, v_n]\defeq \partial_1^k v_1 + \dotsb + \partial_n^k v_n\) \cite{Bourgain_Brezis_2007}*{cor.\ 24}; suitable algebraic arguments can be used to extend their result to the full class of cocancelling operators \citelist{\cite{VanSchaftingen_2008}\cite{VanSchaftingen_2013}}.
The advantage of Bourgain and Brezis's proof compared to the more elementary approach presented in the next section is that it yields stronger estimates of the form 
\begin{equation}
  \abs[\Big]{\int_{\Rset^n} f \cdot \varphi}
  \le \C \norm{f}_{L^1 (\Rset^n) + \dot{W}^{-1,n/(n-1)}(\Rset^n)}
  \brk[\Big]{\int_{\Rset^n} \abs{\Deriv \varphi}^n}^\frac{1}{n}
\end{equation}
instead of \eqref{eq_koo4lui9Eipo7XeenaeghaoR}.

\section{Proving the duality estimate}

We present a proof of \cref{theorem_cocancelling} due to the author \citelist{\cite{VanSchaftingen_2008}\cite{VanSchaftingen_2013}} (see also \cite{VanSchaftingen_2023}).

\subsection{The case of the divergence}
We first prove the estimate \eqref{eq_koo4lui9Eipo7XeenaeghaoR} when \(L (\Deriv)\) is the divergence, following \cite{VanSchaftingen_2004_div}.
Fixing some vector \(\nu \in \Rset^n\), we are going to estimate
\begin{equation}
\label{eq_AehaeZ4dieh7ichax4ieyaex}
  \int_{\Rset^n} f \cdot \nu \,\phi\;,
\end{equation}
for \(\phi \in C^\infty_c (\Rset^n, \Rset)\).
Without loss of generality we can assume that \(\nu = e_n\) is the \(n\)--th vector in the canonical basis of \(\Rset^n\).
We write by Fubini's theorem
\begin{equation}
\label{eq_Ohseequaz4Faing1ingeeshi}
  \int_{\Rset^n} f \cdot e_n\, \phi
  = \int_{\Rset} \brk[\Big]{\int_{\Rset^{n - 1}} f  (\cdot, x_n) \cdot e_n \,\phi (\cdot, x_n)} \dif x_n\;.
\end{equation}
In order to estimate the inner integral on the right-hand side of \eqref{eq_Ohseequaz4Faing1ingeeshi}, we first note that for every \(\psi \in C^\infty (\Rset^{n - 1}, \Rset)\), we have 
\begin{equation}
\label{eq_tie1Lefemieyum2zasoeWaip}
 \abs[\Big]{\int_{\Rset^{n - 1}} f  (\cdot, x_n) \cdot e_n \,\psi} 
 \le\norm{\psi}_{L^\infty (\Rset^{n - 1})} \int_{\Rset^{n - 1}} \abs{f(\cdot, x_n)} \;.
\end{equation}
On the other hand, by the Gauss--Ostrogradsky divergence theorem, since 
\(\operatorname{div} f = 0\), we also have
\begin{equation}
\label{eq_hee5oob8ohhachookool6ahS}
 \int_{\Rset^{n - 1}} f  (\cdot, x_n) \cdot e_n \,\psi
 = - \int_{\Rset^{n} \times \intvo{x_n}{\infty}} \operatorname{div} (f\,\Psi)
 = - \int_{\Rset^{n} \times \intvo{x_n}{\infty}} f \cdot  \nabla \Psi\;,
\end{equation}
where the function \(\Psi : \Rset^n \to \Rset\) is defined by \(\Psi (x', x_n) \defeq \psi (x')\),
and thus it follows from \eqref{eq_hee5oob8ohhachookool6ahS} that 
\begin{equation}
\label{eq_eiqu4daeFuDeiv0ZaiPuijai}
  \abs[\Big]{\int_{\Rset^{n - 1}} f  (\cdot, x_n) \cdot e_n \,\psi}
  \le \norm{\Deriv \Psi}_{L^\infty (\Rset^{n})} \int_{\Rset^n} \abs{f} 
  = \norm{\Deriv \psi}_{L^\infty (\Rset^{n - 1})} \int_{\Rset^n} \abs{f} \;.
\end{equation}

Given \(\alpha \in \intvo{0}{1}\) and fixing a function \(\eta \in C^\infty_c (\Rset^{n - 1}, \Rset)\) such that \(\int_{\Rset^{n - 1}} \eta = 1\), we define for each \(\lambda \in \intvo{0}{\infty}\), the function \(\psi_{\lambda} : \Rset^{n - 1} \to \Rset\) by setting for each \(x \in \Rset^{n - 1}\)
\begin{equation}
\label{eq_eingaiYah0Eejie8ooQui9ni}
  \psi_\lambda (x) 
  \defeq 
  \int_{\Rset^{n - 1}} \eta (y)\,\psi (x - \lambda y) \dif y= 
  \int_{\Rset^{n - 1}} \eta \brk[\Big]{\frac{x}{\lambda} - z} \psi (\lambda z) \dif z\;.
\end{equation}
Since \(\int_{\Rset^{n - 1}} \eta = 1\), for each \(x \in \Rset^{n - 1}\) we have by \eqref{eq_eingaiYah0Eejie8ooQui9ni}
\begin{equation}
\label{eq_ieghaifahnge3eeSeiyiequ7}
    \psi_\lambda (x) - \psi (x) 
  = 
    \int_{\Rset^{n - 1}} \eta (y)\, \bigl(\psi (x - \lambda y) - \psi (x)\bigr) \dif y\;, 
\end{equation}
and thus by \eqref{eq_ieghaifahnge3eeSeiyiequ7},
\begin{equation}
  \label{eq_ahx1Ucheik4heGohx}
\begin{split}
\abs{\psi_\lambda (x) - \psi (x)} 
&\le \int_{\Rset^{n - 1}} \abs{\eta (y)} \, \abs{\psi (x - \lambda y) - \psi (x)} \dif y\\
&\le \seminorm{\psi}_{C^{0, \alpha} (\Rset^{n - 1})} \lambda^\alpha 
\int_{\Rset^{n - 1}} \abs{\eta (y)} \, \abs{y}^\alpha \dif y  = \Cl{cst_nooSu1eyohreiy8ei}   \seminorm{\psi}_{C^{0, \alpha} (\Rset^{n - 1})} \lambda^\alpha\; ,
\end{split}
\end{equation}
with the Hölder seminorm being defined as 
\begin{equation}
\label{eq_As1eeshaiquah1Oom2zah9ai}
 \abs{\psi}_{C^{0, \alpha} (\Rset^{n - 1})}
 \defeq \sup_{\substack{x, y \in \Rset^{n - 1}\\ x \ne y}}
 \frac{\abs{\psi (x) - \psi (y)}}{\abs{x - y}^\alpha}\;.
\end{equation}
On the other hand, for each \(x \in \Rset^{n - 1}\), we also have by \eqref{eq_eingaiYah0Eejie8ooQui9ni} 
\begin{equation}
  \label{eq_ka7IcieLie5Isaifa7xieshu}
\begin{split}
\Deriv \psi_\lambda (x)
&= 
\frac{1}{\lambda} \int_{\Rset^{n - 1}} \Deriv \eta \Bigl(\frac{x}{\lambda} - z\Bigr) \psi (\lambda z) \dif z
\\
&=
\frac{1}{\lambda} \int_{\Rset^{n - 1}} \Deriv \eta (y)\, \psi (x - \lambda y) \dif y  
= \frac{1}{\lambda} \int_{\Rset^{n - 1}} \Deriv \eta (y) \, \brk[\big]{\psi (x - \lambda y) - \psi (x)}\dif y\;,
\end{split}
\end{equation}
since \(\int_{\Rset^{n - 1}} \Deriv \eta = 0\).
Hence, we have by \eqref{eq_ka7IcieLie5Isaifa7xieshu} and by \eqref{eq_As1eeshaiquah1Oom2zah9ai}
\begin{equation}
  \label{eq_Ahdai9gareT3phein}
  \begin{split}
    \abs{\Deriv \psi_\lambda (x)}
    &\le
    \frac{1}{\lambda} 
    \int_{\Rset^{n - 1}} 
    \abs{\Deriv \eta (y)}\, \abs{\psi (x - \lambda y) - \psi (x)} \dif y\\
    &\le 
    \frac{\seminorm{\psi}_{C^{0, \alpha} (\Rset^{n - 1})}}{\lambda^{1 - \alpha}}
      \int_{\Rset^{n - 1}} \abs{\Deriv \eta (y)} \,\abs{y}^\alpha \dif y
      = 
      \Cl{cst_iehuje0xei8mong1W} \frac{\seminorm{\psi}_{C^{0, \alpha} (\Rset^{n - 1})}}{\lambda^{1 - \alpha}}\;.
  \end{split}
\end{equation}
By \eqref{eq_tie1Lefemieyum2zasoeWaip}, \eqref{eq_eiqu4daeFuDeiv0ZaiPuijai},  \eqref{eq_ahx1Ucheik4heGohx} and \eqref{eq_Ahdai9gareT3phein}, we have 
\begin{equation}
  \label{eq_maeNg8quaeho8ohDaye2ohVa}
\begin{split}
\abs*{\int_{\Rset^{n - 1}} f  (\cdot, x_n) \cdot e_n \,\psi}
&\le
\abs*{\int_{\Rset^{n - 1}} f  (\cdot, x_n) \cdot e_n (\psi - \psi_\lambda)}
+ \abs*{\int_{\Rset^{n - 1}} f  (\cdot, x_n) \cdot e_n \psi_\lambda}\\
&\le 
\norm{\psi - \psi_\lambda}_{L^\infty (\Rset^{n - 1})}  \int_{\Rset^{n - 1}} \abs{f(\cdot, x_n)} 
+ \norm{\Deriv \psi_\lambda}_{L^\infty (\Rset^{n - 1})} \int_{\Rset^n} \abs{f}\\
&\le 
\brk[\Big]{
\Cr{cst_nooSu1eyohreiy8ei}   \lambda^\alpha \int_{\Rset^{n - 1}} \abs{f(\cdot, x_n)} 
+ 
 \frac{\Cr{cst_iehuje0xei8mong1W}}{\lambda^{1 - \alpha}}\int_{\Rset^n} \abs{f} }\seminorm{\psi}_{C^{0, \alpha} (\Rset^{n - 1})}\;.
\end{split}
\end{equation}
If we choose 
\begin{equation*}
 \lambda \defeq \frac{\displaystyle \int_{\Rset^n} \abs{f}}{\displaystyle \int_{\Rset^{n - 1}} \abs{f(\cdot, x_n)}}\;,
\end{equation*}
we get from \eqref{eq_maeNg8quaeho8ohDaye2ohVa} the interpolation estimate
\begin{equation}
\label{eq_iFaenai7iengiej4soo7aubo}
  \abs[\Big]{\int_{\Rset^{n - 1}} f  (\cdot, x_n) \cdot e_n \,\psi}
  \le \C \brk[\Big]{\int_{\Rset^{n - 1}} \abs{f(\cdot, x_n)}}^{1 - \alpha} \brk[\Big]{\int_{\Rset^n} \abs{f}}^\alpha  \abs{\psi}_{C^{0, \alpha} (\Rset^{n - 1})}\;,
\end{equation}

By the Morrey--Sobolev inequality, we also have
\begin{equation}
\label{eq_deish2Xahjiepugha5soonad}
 \abs{\psi}_{C^{0,1/n} (\Rset^{n - 1})}
 \le 
\C
\brk*{\int_{\Rset^{n - 1}} \abs{\Deriv \psi}^n}^\frac{1}{n}\;.
\end{equation}
Combining \eqref{eq_iFaenai7iengiej4soo7aubo} with \(\alpha = \frac{1}{n}\) and \eqref{eq_deish2Xahjiepugha5soonad}, we get 
\begin{equation}
\label{eq_eipiu2waeTh8ooboophio5te}
  \abs[\Big]{\int_{\Rset^{n - 1}} f  (\cdot, x_n) \cdot e_n\, \psi}
  \le \Cl{cst_Ahgh1GewophaFoh1laiCuiph} \brk[\Big]{\int_{\Rset^{n - 1}} \abs{f(\cdot, x_n)}}^{1 - \frac{1}{n}} \brk[\Big]{\int_{\Rset^n} \abs{f}}^\frac{1}{n} \brk[\Big]{\int_{\Rset^{n - 1}} \abs{\Deriv \psi}^n}^\frac{1}{n}\;.
\end{equation}
Next, by \eqref{eq_Ohseequaz4Faing1ingeeshi} and \eqref{eq_eipiu2waeTh8ooboophio5te}, we obtain by Hölder's inequality
\begin{equation}
\label{eq_ieBaecheinohth1ohza1OhQu}
\begin{split}
 \abs[\Big]{\int_{\Rset^{n}} f \cdot e_n\, \phi}
 &\le \Cr{cst_Ahgh1GewophaFoh1laiCuiph}
 \brk[\Big]{\int_{\Rset^n} \abs{f}}^\frac{1}{n}
 \int_{\Rset} \brk[\Big]{\int_{\Rset^{n - 1}} \abs{f(\cdot, x_n)}}^{1 - \frac{1}{n}} \brk[\Big]{\int_{\Rset^{n - 1}} \abs{\Deriv \phi (\cdot, x_n)}^n}^\frac{1}{n} \dif x_n\\
 &\le \Cr{cst_Ahgh1GewophaFoh1laiCuiph}
 \int_{\Rset^n} \abs{f} \brk[\Big]{\int_{\Rset^{n}} \abs{\Deriv \phi}^n}^\frac{1}{n} \;.
\end{split}
\end{equation}
Therefore for any vector \(\nu \in \Rset^n\), it follows from \eqref{eq_ieBaecheinohth1ohza1OhQu} that we have proved
\begin{equation}
\label{eq_woNgoo3heiD5Oogh8eneeguo}
 \abs*{\int_{\Rset^{n}} f \cdot \nu \,\phi}
 \le \Cr{cst_Ahgh1GewophaFoh1laiCuiph}\abs{\nu}
 \brk*{\int_{\Rset^n} \abs{f}} \brk*{\int_{\Rset^{n - 1}} \abs{\Deriv \phi}^n}^\frac{1}{n} .
\end{equation}
Decomposing \(\varphi = \sum_{j = 1}^n e_i \phi_i\) with \(\phi_i \defeq e_i \cdot \varphi\), we finally obtain the announced inequality \eqref{eq_koo4lui9Eipo7XeenaeghaoR} from \eqref{eq_woNgoo3heiD5Oogh8eneeguo}.

\subsection{The general case}
We now consider the case where \(L (\Deriv)\) is a general cancelling operator.
Similarly to the estimate of \eqref{eq_AehaeZ4dieh7ichax4ieyaex} when \(L (\Deriv)\) is the divergence, we will estimate the integral
\begin{equation}
  \int_{\Rset^n} L(\Deriv)[\nu] \phi\;,
\end{equation}
for \(\nu \in \Rset^n\) and \(\phi \in C^\infty_c (\Rset^n, F)\).

First we readily have as a counterpart of \eqref{eq_tie1Lefemieyum2zasoeWaip}
\begin{equation}
\label{eq_uif9Eipo3Ia0Oi0booVeh3ph}
\begin{split}
 \abs[\Big]{\int_{\Rset^{n - 1}} L(e_n)[f  (\cdot, x_n)] \cdot \psi}
 &\le \norm{\psi}_{L^\infty (\Rset^{n - 1})} \int_{\Rset^{n - 1}} \abs{L(e_n)[f(\cdot, x_n)]}\\
 &\le \C \norm{\psi}_{L^\infty (\Rset^{n - 1})} \int_{\Rset^{n - 1}} \abs{f(\cdot, x_n)} \;.
 \end{split}
\end{equation}
Fixing \(x_n \in \Rset\), we define the function \(\theta_m : \Rset^n_+ \to \Rset\) for \(x' \in \Rset^{n - 1}\) and \(t \in \Rset\) by 
\begin{equation}
  \label{eq_GaechahLae3wai4ooYaik7cu}
\theta_m (x', x_n + t) \defeq \frac{t{}^{m - 1}}{(m - 1) !}\; 
\end{equation}
and \(\theta_0 \defeq 0\).
By \(k\) successive integration by parts we have, 
\begin{multline}
  \label{eq_uf4Koo1wahhic8aiBoo}
\int_{\Rset^{n - 1} \times \intvo{x_n}{\infty}} (\partial_n^m L(e_n)[f])\cdot (\theta_m \Psi)
- (-1)^{m} L (e_n)[f] \cdot (\partial_n^m (\theta_m \Psi))\\
  = - \sum_{j = 0}^{m - 1}
  (-1)^j
  \int_{\Rset^{n - 1} \times \set{x_n}} (\partial_n^{m - 1 - j} L(e_n)[f]) \cdot (\partial_n^j (\theta_m \Psi))\;,
\end{multline}
where \(\Psi \in C^\infty (\Rset^n, F)\) satisfies \(\Psi \vert_{\Rset^{n - 1} \times \set{x_n}} = \psi\).
For each \(j \in \set{0, \dotsc, m}\), we have by the Leibniz rule
\begin{equation}
  \label{eq_aiZootheiX1Dooche7ooqua7}
\partial_n^j (\theta_m \Psi)
= \sum_{i = 0}^j {\textstyle \binom{j}{i}} \, \theta_{m - i} \,\partial_n^{j - i} \Psi \;,
\end{equation}
since \(\partial_n^i \theta_m = \theta_{m - i}\).
It follows from  \eqref{eq_aiZootheiX1Dooche7ooqua7} that 
\(
\partial_n^j (\theta_m \Psi) = 0
\) on \(\Rset^{n - 1} \times \set{x_n}\) when \(j \in \set{1, \dotsc,  m - 2}\) whereas \(
\partial_n^{m - 1} (\theta_m \Psi)(\cdot, 0) = \psi\) on \(\Rset^{n - 1} \times \set{x_n}\). 
We can thus rewrite the right-hand side in \eqref{eq_uf4Koo1wahhic8aiBoo} as
\begin{equation}
  \sum_{j = 0}^{m - 1}
  (-1)^j
  \int_{\Rset^{n - 1} \times \set{x_n}} \hspace{-2em}(\partial_n^{m - j} L (e_n)[f]) \cdot (\partial_n^j (\theta_m \Psi))
  = (-1)^{m - 1} \int_{\Rset^{n - 1} \times \set{x_n}} (L(e_n)[f])\cdot \psi \;,
\end{equation}
and hence by \eqref{eq_uf4Koo1wahhic8aiBoo},
\begin{equation}
  \label{eq_viifuBaibee7Eephiyais7ph}
  \int_{\Rset^{n - 1} \times \set{x_n}} L(e_n)[f] \cdot \psi
  = \int_{\Rset^{n} \times \intvo{x_n}{\infty}} (-1)^{m}(\partial_n^m L(e_n)[f])\cdot (\theta_m \Psi) - L(e_n)[f]\cdot (\partial_n^m (\theta_m \Psi))\;.
\end{equation}
We also have by \eqref{eq_aiZootheiX1Dooche7ooqua7}, 
\begin{equation}
  \label{eq_HaiHahzou6ha6Hai5ay}
\abs[\Big]{
  \int_{\Rset^{n - 1} \times \intvo{x_n}{\infty}} (L(e_n)[f]) \cdot (\partial_n^m (\theta_m \Psi))}
\le 
\C \brk[\Big]{\sum_{j = 1}^m  \norm{\theta_j \Deriv^j \Psi}_{L^\infty(\Rset^{n - 1}\times (x_n,\infty))}}
\int_{\Rset^n} \abs{L (e_n)[f]}\;.
\end{equation}
Next, we rewrite the operator \(L (\xi)\) for \(\xi =( \xi', \xi_n) \in \Rset^n\)  as 
\begin{equation}
\label{eq_lah1mahphahJohleeThathai}
L(\xi) = \sum_{j = 0}^{m} \xi_n^{m - j} L_j (\xi')\;,
\end{equation}
where for every \(j \in \set{0, \dotsc, m}\), \(L_j(\Deriv')\) is a homogeneous linear differential operator on \(\Rset^{n - 1}\) of degree \(j\) from \(E\) to \(F\).
By our assumption we have \(L (\Deriv) f = 0\) and by \eqref{eq_lah1mahphahJohleeThathai} we have, since \(L (\nu) = L_0 (\xi)\),  
\begin{equation}
  \label{eq_maisiesequ7yuGhahch}
    \int_{\Rset^{n - 1} \times \intvo{x_n}{\infty}} (\partial_n^m L(\nu)[f])\cdot (\theta_m \Psi)
  =
  -  \sum_{j = 1}^{m } \int_{\Rset^{n - 1} \times \intvo{x_n}{\infty}} (\partial_n^{m - j}  L_j (\Deriv') [f])\cdot(\theta_m \Psi)\;.
\end{equation}
By integration by parts, we compute for every \(j \in \set{1, \dotsc, m}\),
\begin{equation}
  \label{eq_ciex5Ahh0aephooKaid}
\int_{\Rset^{n - 1} \times \intvo{x_n}{\infty}} (\partial_n^{m - j}  L_j (\Deriv') [f])\cdot (\theta_m \Psi)
= (-1)^{m} \int_{\Rset^{n - 1} \times \intvo{x_n}{\infty}} f \cdot L_j^* (\Deriv')\partial_n^{m - j} ( \theta_m \Psi)\;,
\end{equation}
where 
\(L_j (\xi')^* \in \Lin (E, F)\) is the adjoint to \(L_j (\xi')\).
We compute, for each \(j \in \set{1, \dotsc, m}\), by the general Leibniz rule again
\begin{equation}
\label{eq_Ahhaish2ieC5zeeMohyeelae}
\begin{split}
L_j (\Deriv')^*
\partial_n^{m - j} ( \theta_m \Psi)
&= 
\partial_n^{m - j}  (\theta_m L_j (\Deriv')^* \Psi)\\
&= \sum_{i= 0}^{m - j} \tbinom{m - j}{i} (\partial_n^i \theta_m) (\partial_n^{m - j - i} L_j (\Deriv')^* \Psi)\\
&= \sum_{i = 0}^{m - j} \tbinom{m - j}{i} \theta_{m - i} \partial_n^{m - j - i} L_j (\Deriv')^* \Psi\;,
\end{split}
\end{equation}
so that  by \eqref{eq_maisiesequ7yuGhahch} and  \eqref{eq_ciex5Ahh0aephooKaid},
\begin{equation}
  \label{eq_ci0gahJahrij1ohphei}
\abs[\Big]{
  \int_{\Rset^{n - 1}\times \intvo{x_n}{\infty}} (\partial_n^m L(\nu)[f])\cdot(\theta_m \Psi)
}
\le \C \brk[\bigg]{\sum_{j = 1}^m \norm{\theta_j \Deriv^j \Psi}_{L^\infty(\Rset^{n -1}\times \intvo{x_n}{\infty})}} \int_{\Rset^n} \abs{f}\;.
\end{equation}
Cobmining the identity \eqref{eq_viifuBaibee7Eephiyais7ph} and the inequalities \eqref{eq_HaiHahzou6ha6Hai5ay} and \eqref{eq_ci0gahJahrij1ohphei}  we get 
\begin{equation}
\label{eq_eineic5aX1Gub1sie6ehiez8}
  \abs[\Big]{
  \int_{\Rset^{n - 1} \times \set{x_n} } (L(e_n)[f]) \cdot \psi
}
\le 
   \C 
   \brk[\Big]{
   \sum_{j = 1}^m \norm{\theta_j \Deriv^j \Psi}_{L^\infty(\Rset^{n - 1} \times \intvo{x_n}{\infty})}
   } \int_{\Rset^n} \abs{f}\;;
\end{equation}

It remains now to make a suitable choice of \(\Psi\). If we merely set \(\Psi(x', x_n) = \psi (x')\), we will not be have any control on \(\theta_j \Deriv^j \psi\) if \(j \ge 2\).
The solution is then to fix a function \(\eta \in C^1_c (\Rset^{n - 1}, \Rset)\) such that \(\int_{\Rset^{n - 1}} \eta = 1\) and to define the function \(\Psi : \Rset^{n - 1} \times \intvo{x_n}{\infty} \to F\)
for each \((x', t) \in \Rset^{n - 1} \times \intvo{0}{\infty}\) by
\begin{equation}
\label{eq_eNoo6iu8roh0ulieCepohFah}
  \Psi (x', x_n + t) \defeq
  \frac{1}{t^{n - 1}}
  \int_{\Rset^{n - 1}} \psi (y) \eta \bigl(\tfrac{x' - y}{t} \bigr) \dif y
  =\int_{\Rset^{n - 1}} \psi (x  - t z) \eta (z) \dif z
  \;,
\end{equation}
for which we have for every \(j \in \Nset \setminus \set{0}\)
\begin{equation}
  \label{eq_di4xeex8Yiekakei4Tei1eli}
  \abs{\Deriv^j \Psi (x', x_n + t)} \le \frac{\C \norm{\Deriv \psi}_{L^\infty (\Rset^{n- 1})}}{t^{j - 1}}\;.
\end{equation}
Combining \eqref{eq_eineic5aX1Gub1sie6ehiez8} and \eqref{eq_di4xeex8Yiekakei4Tei1eli}, we get 
\begin{equation}
\label{eq_ua7phoochon6Iey9aiviey0a}
   \abs[\Big]{
  \int_{\Rset^{n - 1} \times \set{x_n} } (L(e_n)[f]) \cdot \psi
}
\le 
   \C 
  \norm{\Deriv \psi}_{L^\infty (\Rset^{n - 1})} \int_{\Rset^n_+} \abs{f}\;.
\end{equation}
We proceed then as in the case where \(L (\Deriv) = \operatorname{div}\), first interpolating between \eqref{eq_uif9Eipo3Ia0Oi0booVeh3ph} and \eqref{eq_ua7phoochon6Iey9aiviey0a}, and then integrating and applying Hölder's inequality; we obtain thus the inequality
\begin{equation}
\label{eq_eSh9ka8xeef5ahcaingeiloh}
 \abs[\Big]{\int_{\Rset^n} L (\nu)[f] \cdot \phi\,}
 \le \C \abs{\nu} \int_{\Rset^n} \abs{f}\, \brk[\Big]{\int_{\Rset^n} \abs{\Deriv \phi}^n}^\frac{1}{n}.
\end{equation}

Finally, since the operator \(L (\Deriv)\) is cocancelling, there exist \(\nu_1, \dotsc, \nu_r \in \Rset^n\) such that 
\begin{equation}
 \bigcap_{i = 1}^r \ker L (\nu_i) = \set{ 0}\;.
\end{equation}
By a standard linear algebra reasoning, there exist linear mappings \(Q_1, \dotsc, Q_r \in \Lin (F, E)\) such that 
\begin{equation}
 \operatorname{id}_E = \sum_{i = 1}^r Q_i L (\nu_i)\;.
\end{equation}
Hence, we have 
\begin{equation}
\label{eq_TieHuph5ChaPheef1laec4hi}
\int_{\Rset^n} f \cdot \varphi
 =\sum_{i = 1}^r \int_{\Rset^n} Q_i L(\nu_i)[f] \cdot \varphi
 = \sum_{i = 1}^r \int_{\Rset^n} L(\nu_i)[f] \cdot Q_i^*[\varphi]\;,
\end{equation}
and it follows from \eqref{eq_eSh9ka8xeef5ahcaingeiloh} and \eqref{eq_TieHuph5ChaPheef1laec4hi} that 
\begin{equation}
\begin{split}
 \abs[\Big]{\int_{\Rset^n} f \cdot \varphi\,}
 &\le \C \sum_{i = 1}^n \abs{\nu_i} \int_{\Rset^n} \abs{f}\, \brk[\Big]{\int_{\Rset^n} \abs{\Deriv Q_i^*[\varphi]}^n}^\frac{1}{n}\\
 &\le \C \int_{\Rset^n} \abs{f}\, \brk[\Big]{\int_{\Rset^n} \abs{\Deriv \varphi}^n}^\frac{1}{n}\;,
\end{split}
\end{equation}
which proves \eqref{eq_koo4lui9Eipo7XeenaeghaoR}.
This proves the sufficiency of the cocancellation condition for \eqref{eq_koo4lui9Eipo7XeenaeghaoR} to hold in \cref{theorem_cocancelling}.

\subsection{Getting the estimate for cancelling operators}

As the operator \(A (\Deriv)\) is injectively elliptic, let \(L (\Deriv)\) be the compatibility conditions operator given by \cref{proposition_compatibility_conditions};
since \(A (\Deriv)\) is cancelling, \(L(\Deriv)\) is cocancelling. 
By \cref{theorem_cocancelling} and the representation of bounded linear functionals on Sobolev spaces, there exists \(\smash g \in L^\frac{n}{n - 1} (\Rset^n, E \otimes \Rset^n)\) such that 
\(\operatorname{div} g = A (\Deriv) u\) in the sense of distributions
and 
\[
\brk[\Big]{\int_{\Rset^n} \abs{g}^\frac{n}{n - 1}}^{1 - \frac{1}{n}}
\le \C \int_{\Rset^n} \abs{A (\Deriv) u}\;.
\]
We reach the conclusion when \(\ell = k - 1\) through a suitable regularization argument and through \eqref{eq_veeseeseik4dih9ohwooghiK}; the case \(1 < k - \ell < n\) follows from the classical Sobolev embedding. 

\subsection{Further results}
The techniques of proof of \cref{theorem_cocancelling} can be adapted to fractional settings, thanks to the Fubini property and the Morrey--Sobolev embedding in fractional spaces; if \(p \in \intvo{m}{\infty}\), one gets the estimate in critical Sobolev spaces \citelist{\cite{VanSchaftingen_2008}\cite{VanSchaftingen_2013}} (see also \cite{VanSchaftingen_2023})
\begin{equation}
\label{eq_leil8Eechu0eiJahxoo8nek1}
  \abs[\Big]{\int_{\Rset^n} f \cdot \varphi}
  \le 
  \C \int_{\Rset^n} \abs{f} \; \brk[\Big]{\int_{\Rset^n} \int_{\Rset^n} \frac{\abs{\varphi(y) - \varphi (x)}^p}{\abs{y - x}^{2n}}\dif y \dif x}^\frac{1}{p}
\end{equation} 
if and only if \(L (\Deriv)\) is cocancelling 
as a consequence of \eqref{eq_leil8Eechu0eiJahxoo8nek1}; using then multiplier theorems in fractional Sobolev spaces one gets if \(\smash{\frac{1 - s}{n} = 1 - \frac{1}{p}}\) the estimate 
\begin{equation}
\label{eq_thoh2gohz4phahquahvohSuy}
\brk[\Big]{\int_{\Rset^n} \int_{\Rset^n}  \frac{\abs{\Deriv^{k - 1} u (y) - \Deriv^{k - 1} u (x)}^p} {\abs{y -x}^{n + sp}} \dif y \dif x}^\frac{1}{p} 
\le 
C
\int_{\Rset^n} \abs{A (\Deriv)[u]}\;,
\end{equation}
if and only if \(A (\Deriv)\) is cancelling \cite{VanSchaftingen_2013} (see also \cite{VanSchaftingen_2023}).

The estimate \eqref{eq_thoh2gohz4phahquahvohSuy} yields in turn through classical embeddings estimates in Besov spaces, Triebel--Lizorkin spaces and Lorentz spaces \cite{VanSchaftingen_2013}; for the endpoint cases in these spaces we refer the reader to \citelist{\cite{Stolyarov_2020}\cite{Spector_VanSchaftingen_2019}\cite{Hernandez_Raita_Spector_2022}}.

Similar results for the hyperbolic plane \cite{Chanillo_VanSchaftingen_Yung_2017_Variations} and symmetric spaces of noncompact type \cite{Chanillo_VanSchaftingen_Yung_2017_Symmetric} through the use of suitable integral-geometric formulae.

The approach presented here can also be used to obtain endpoint estimate on stratified homogeneous groups \citelist{\cite{VanSchaftingen_Yung_2022}\cite{Chanillo_VanSchaftingen_2009}}.

\section{Hardy-type inequalities}
The cancellation condition is not only a necessary and sufficient condition for endpoint Sobolev inequalities, but also for example for Hardy inequalities.
We have indeed the next counterpart of \cref{theorem_cancelling_necessary}, which originates in an estimate of Vladimir Gilelevich \familyname{Maz\cprime{}ya} \cite{Mazya_2010} (see also \cite{Bousquet_Mironescu_2011}) and is due to Pierre \familyname{Bousquet} and the author \cite{Bousquet_VanSchaftingen_2014}.

\begin{theorem}
  \label{theorem_Hardy}
If \(A (\Deriv)\) is injectively elliptic, then for every \(u \in C^\infty_c (\Rset^n, V)\),   
\begin{equation}
\label{eq_ahHaichooxah6pha5thah6ie}
\int_{\Rset^n} \frac{\abs{\Deriv^\ell u(x)}}{\abs{x}^{k - \ell}} \dif x 
\le 
\C
\int_{\Rset^n} \abs{A (\Deriv)[u]}\;,
\end{equation}
if and only if \(A (\Deriv)\) is cancelling.
\end{theorem}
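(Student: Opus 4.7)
The plan mirrors the strategy for \cref{theorem_cancelling_necessary}. Necessity is witnessed by a singular family built from the fundamental solution of \cref{proposition_fundamental_solution}, while sufficiency is obtained by upgrading the endpoint Sobolev estimate \eqref{eq_yiey3Yud2aeB8eimahd0baiy} to the Lorentz scale and pairing it with the Hardy weight through Hölder's inequality in Lorentz spaces.

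\emph{Necessity.} Assume \(A (\Deriv)\) is not cancelling and pick \(e \in \bigcap_{\xi \ne 0} A (\xi)[V]\) with \(e \ne 0\). Since \(A (\xi)\, A (\xi)^\dagger [e] = e\) for every \(\xi \ne 0\), a suitable extension of the function from \cref{proposition_fundamental_solution} satisfies \(A (\Deriv) G_A[e] = e\,\delta_0\) in \(\mathcal{D}'(\Rset^n)\). Set \(u_\varepsilon \defeq \chi \cdot (\eta_\varepsilon * G_A[e])\), where \(\chi \in C^\infty_c (\Rset^n)\) equals \(1\) on \(B (0, 1)\) and \(\eta_\varepsilon\) is a standard mollifier supported in \(B (0, \varepsilon)\). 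By the Leibniz rule, \(A (\Deriv) u_\varepsilon\) splits into the main term \(\chi\,\eta_\varepsilon\,e\), whose \(L^1\) norm is at most \(\abs{e}\), and error terms supported in the annulus where \(\Deriv \chi\) lives and on which \(G_A[e]\) is smooth; hence \(\norm{A (\Deriv) u_\varepsilon}_{L^1}\) is bounded uniformly in \(\varepsilon\). Because \(e \delta_0 \ne 0\), the distribution \(G_A[e]\) cannot be a polynomial of degree less than \(k\), so \(\Deriv^\ell G_A[e]\) is a nonzero function, homogeneous of degree \(k - n - \ell\) up to the logarithmic correction of \cref{proposition_fundamental_solution}. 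Since \(u_\varepsilon\) agrees with \(G_A[e]\) on \(\set{x \in \Rset^n \st 2\varepsilon \le \abs{x} \le 1/2}\), polar coordinates give
\[
\int_{\Rset^n} \frac{\abs{\Deriv^\ell u_\varepsilon (x)}}{\abs{x}^{k - \ell}} \dif x
\ge c \int_{2\varepsilon}^{1/2} \frac{\dif r}{r}
\xrightarrow[\varepsilon \to 0]{} +\infty,
\]
which contradicts \eqref{eq_ahHaichooxah6pha5thah6ie}.

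\emph{Sufficiency.} Assume \(A (\Deriv)\) is cancelling. The plan is first to establish the Lorentz-space sharpening of \cref{theorem_cancelling_necessary},
\[
\norm{\Deriv^\ell u}_{L^{n/(n - (k - \ell)),\,1}(\Rset^n)}
\le C \int_{\Rset^n} \abs{A (\Deriv) u},
\]
obtained by running the proof of \cref{theorem_cocancelling} in the fractional Sobolev scale indicated by \eqref{eq_leil8Eechu0eiJahxoo8nek1}--\eqref{eq_thoh2gohz4phahquahvohSuy} and then passing from fractional Sobolev to Lorentz via a classical multiplier theorem. Since \(\abs{x}^{-(k - \ell)}\) lies in the weak Lorentz space \(L^{n/(k - \ell),\,\infty}(\Rset^n)\), Hölder's inequality in Lorentz spaces then yields
\[
\int_{\Rset^n} \frac{\abs{\Deriv^\ell u}}{\abs{x}^{k - \ell}} \dif x
\le C' \norm{\Deriv^\ell u}_{L^{n/(n - (k - \ell)),\,1}(\Rset^n)}
\le C'' \int_{\Rset^n} \abs{A (\Deriv) u},
\]
which is \eqref{eq_ahHaichooxah6pha5thah6ie}.

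The technical heart of the argument is this Lorentz refinement: combining \cref{theorem_cocancelling} directly with the classical endpoint Sobolev inequality only places \(\Deriv^\ell u\) in the Lebesgue space \(L^{n/(n - (k - \ell))}\), which is too large to pair with the weak-\(L^{n/(k - \ell)}\) Hardy weight. One must therefore strengthen the proof of \cref{theorem_cocancelling} by replacing the Morrey--Sobolev step \eqref{eq_deish2Xahjiepugha5soonad} by a Hölder or fractional Sobolev estimate, bootstrapping the duality inequality \eqref{eq_koo4lui9Eipo7XeenaeghaoR} to a pairing against a fractional Sobolev seminorm of \(\varphi\), and closing with a multiplier theorem on the Lorentz scale.
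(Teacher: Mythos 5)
Your necessity argument coincides with the paper's: one takes \(e\in\bigcap_{\xi\ne0}A(\xi)[V]\setminus\{0\}\), regularizes \(G_A[e]\), and uses the homogeneity of \(\Deriv^\ell G_A[e]\) to make the weighted integral diverge logarithmically. That part is fine (and your elaboration — that \(\Deriv^\ell G_A[e]\ne0\) because otherwise \(A(\Deriv)G_A[e]=0\ne e\delta_0\) — is the right justification).

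The sufficiency argument, however, has a real gap. You correctly identify that the Lebesgue-scale estimate \(\norm{\Deriv^\ell u}_{L^{n/(n-(k-\ell))}}\lesssim\norm{A(\Deriv)u}_{L^1}\) is insufficient to pair against \(\abs{x}^{-(k-\ell)}\in L^{n/(k-\ell),\infty}\), and that the Lorentz refinement
\[
\norm{\Deriv^\ell u}_{L^{n/(n-(k-\ell)),\,1}}\le C\norm{A(\Deriv)u}_{L^1}
\]
would close the proof by O'Neil/Hölder in Lorentz spaces. But the way you propose to obtain it does not work. The fractional estimate \eqref{eq_thoh2gohz4phahquahvohSuy} holds for \(p>1\) with \(\tfrac{1-s}{n}=1-\tfrac1p\), and the Lorentz–Sobolev embedding \(\dot W^{s,p}\hookrightarrow L^{q,p}\) with \(\tfrac1q=\tfrac1p-\tfrac sn\) only produces \(L^{n/(n-1),\,p}\) for \(p>1\); the second index does not improve to \(1\) by letting \(p\to1\), and ``a classical multiplier theorem'' does not produce Lorentz endpoints from fractional Sobolev norms. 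The endpoint \(L^{q,1}\) estimate is a genuinely harder theorem — this is exactly why the paper cites \cite{Stolyarov_2020}, \cite{Spector_VanSchaftingen_2019} and \cite{Hernandez_Raita_Spector_2022} as separate \emph{endpoint} results after mentioning the fractional and non-endpoint Lorentz scales. Your route through Lorentz duality is a valid alternative strategy (it is in fact what those later papers pursue), but you would need an independent proof of the \(L^{q,1}\) estimate, not a reduction to \eqref{eq_thoh2gohz4phahquahvohSuy}.

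The paper's own proof of sufficiency avoids Lorentz spaces entirely. It writes \(\Deriv^\ell u\) as a convolution with \(\Deriv^\ell G_A\), splits the kernel by a cutoff \(\varrho(y/\abs{x})\) into a piece \(H_A(x,y)=\varrho(y/\abs{x})\,\Deriv^\ell G_A(x)\), which freezes the singular direction and whose contribution is controlled via the algebraic compensation estimate of \cref{proposition_L1_compensation} applied to \(f=A(\Deriv)u\) (this is where cocancellation of the compatibility operator \(L(\Deriv)\) is used), and a remainder \(K_A(x,y)=\Deriv^\ell G_A(x-y)-\varrho(y/\abs{x})\Deriv^\ell G_A(x)\), whose weighted \(L^1\) norm in \(x\) is bounded uniformly in \(y\) by direct homogeneity estimates on the kernel. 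The combination gives \eqref{eq_ahHaichooxah6pha5thah6ie} directly. This kernel decomposition together with \cref{proposition_L1_compensation} is the device your proposal would need to replace.
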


The main tool in the proof of \cref{theorem_Hardy} will be the following duality estimate, which is due to Pierre \familyname{Bousquet} and to the author \cite{Bousquet_VanSchaftingen_2014} (see also \cite{Raita_2019}).

\begin{proposition}
\label{proposition_L1_compensation}
If \(L (\Deriv)\) is a cocancelling operator of order \(m \in \Nset \setminus \set{0}\), if \(f \in C^\infty_c (\Rset^n, E)\) satisfies \(L (\Deriv) f = 0\) and if \(\varphi \in C^\infty_c (\Rset^n, E)\),
one has
  \begin{equation}
  \label{eq_xeiGh4eefai0xiephu2Choo7}
    \abs[\Big]{\int_{\Rset^n} f \cdot \varphi}
    \le 
    \C
    \sum_{j = 1}^m
     \int_{\Rset^n} \abs{f (x) } \,\abs{x}^{j} \, \abs{\Deriv^j \varphi (x)}\dif x\;.
  \end{equation}
\end{proposition}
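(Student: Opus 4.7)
The plan is to follow the same architecture as in the proof of \cref{theorem_cocancelling}: reduce to a one-direction problem via cocancellation, and then exploit the structural identity \(L(\Deriv)f=0\) through integration by parts in the distinguished direction. The new difficulty is that the right-hand side should now be a pointwise Hardy weight \(\abs{f(y)}\,\abs{y}^j\,\abs{\Deriv^j\varphi(y)}\) rather than an \(L^n\) norm of \(\Deriv\varphi\), and so the integration by parts must be tuned to produce weights of this form.

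First I would use cocancellation to select \(\nu_1,\dotsc,\nu_r\in\Rset^n\setminus\set{0}\) with \(\bigcap_i\ker L(\nu_i)=\set{0}\) and linear maps \(Q_i\in\Lin(F,E)\) satisfying \(\operatorname{id}_E=\sum_iQ_iL(\nu_i)\), exactly as in the proof of \cref{theorem_cocancelling} around \eqref{eq_TieHuph5ChaPheef1laec4hi}. Since \(\abs{\Deriv^j(Q_i^*\varphi)}\le\norm{Q_i}\abs{\Deriv^j\varphi}\) pointwise, the decomposition \(\int f\cdot\varphi=\sum_i\int L(\nu_i)[f]\cdot Q_i^*\varphi\) reduces the statement to the one-direction inequality
\[
\abs[\Big]{\int_{\Rset^n}L(\nu)[f]\cdot\psi}\le \C\sum_{j=1}^m\int_{\Rset^n}\abs{f(x)}\,\abs{x}^j\,\abs{\Deriv^j\psi(x)}\dif x
\]
for any \(\nu\in\Rset^n\setminus\set{0}\) and any \(\psi\in C^\infty_c(\Rset^n,F)\); by rotation we may take \(\nu=e_n\).

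Next, writing \(L(\xi)=\sum_{j=0}^m\xi_n^{m-j}L_j(\xi')\) as in \eqref{eq_lah1mahphahJohleeThathai}, the hypothesis \(L(\Deriv)f=0\) gives the identity \(\partial_n^mL(e_n)[f]=-\sum_{j=1}^m\partial_n^{m-j}L_j(\Deriv')[f]\) that drove the proof of \cref{theorem_cocancelling}. Picking any extension \(\Psi\) of \(\psi\) to \(\Rset^{n-1}\times\intvo{x_n}{\infty}\) with \(\Psi(\cdot,x_n)=\psi(\cdot,x_n)\), the chain of integrations by parts leading to \eqref{eq_viifuBaibee7Eephiyais7ph}--\eqref{eq_eineic5aX1Gub1sie6ehiez8} transfers the horizontal operators \(L_j(\Deriv')^*\) and the remaining vertical derivatives onto \(\Psi\), producing (with \(\theta_j\) as in \eqref{eq_GaechahLae3wai4ooYaik7cu}) the bound
\[
\abs[\Big]{\int_{\Rset^{n-1}\times\set{x_n}}L(e_n)[f]\cdot\psi(\cdot,x_n)}\le \C\sum_{j=1}^m\int_{\Rset^{n-1}\times\intvo{x_n}{\infty}}\abs{f(y)}\,\theta_j(y)\,\abs{\Deriv^j\Psi(y)}\dif y.
\]

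The hard part is the choice of \(\Psi\): the constant-in-\(x_n\) extension \(\Psi(x',x_n+t)=\psi(x')\) causes the subsequent \(x_n\)-integration to diverge, whereas the smoothing extension \eqref{eq_eNoo6iu8roh0ulieCepohFah} used for \cref{theorem_cocancelling} yields \(L^n\)-norms of \(\Deriv\psi\) and not pointwise weights. The plan is to use an extension whose scale depends on the position, schematically \(\Psi(x',x_n+t)=\varphi(x',x_n+t)\,\eta(t/r)\) with \(\eta\in C^\infty_c(\intvc{0}{1},\Rset)\), \(\eta(0)=1\), and \(r\) comparable to the distance of \(y\) to the origin, so that the factor \(\theta_j(y)\,\abs{\Deriv^j\Psi(y)}\) is controlled pointwise by \(\abs{y}^{j-1}\,\abs{\Deriv^j\varphi(y)}\) after expanding by Leibniz's rule; integrating the resulting \(x_n\)-pointwise estimate in \(x_n\) then contributes the missing power of \(\abs{y}\) through the width of the cutoff, yielding the Hardy weight \(\abs{y}^j\,\abs{\Deriv^j\varphi(y)}\). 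Summing over \(i\) the one-direction estimates for each \(\nu_i\) of the first step then gives \eqref{eq_xeiGh4eefai0xiephu2Choo7}.
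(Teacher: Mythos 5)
Your first reduction via cocancellation (pick directions $\nu_i$ and maps $Q_i$ with $\operatorname{id}_E=\sum_i Q_i L(\nu_i)$) matches the paper, but from there the paper's proof and yours diverge completely, and your version has a genuine unresolved gap. The paper never slices along a direction at all. It instead builds the polynomial $P(x)=\sum_i (\xi_i\cdot x)^m Q_i^*/m!$, observes that $L(\Deriv)^*[P]=\operatorname{id}_E$, and then, since $L(\Deriv)f=0$, writes by integration by parts
\begin{equation*}
  \int_{\Rset^n} f\cdot\varphi
  = \int_{\Rset^n} f\cdot\bigl(L(\Deriv)^*[P]\,\varphi - L(\Deriv)^*[P\,\varphi]\bigr)\;.
\end{equation*}
Expanding $L(\Deriv)^*[P\varphi]$ by the Leibniz rule, the term in which all $m$ derivatives land on $P$ cancels against $L(\Deriv)^*[P]\,\varphi$, and every remaining term is of the form $\partial^\beta P\cdot\partial^\gamma\varphi$ with $\abs{\gamma}=j\in\set{1,\dotsc,m}$, $\abs{\beta}=m-j$; since $P$ is a homogeneous polynomial of degree $m$, $\abs{\partial^\beta P(x)}\lesssim\abs{x}^j$, which gives \eqref{eq_xeiGh4eefai0xiephu2Choo7} directly. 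That is the whole proof --- no extension $\Psi$, no $\theta_m$, no slab integration.

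The gap in your plan is exactly the step you flagged as ``the hard part.'' The extension $\Psi$ must be a single fixed function on $\Rset^{n-1}\times\intvo{x_n}{\infty}$ prior to integrating over $y$, so its cutoff scale $r$ cannot be ``comparable to the distance of $y$ to the origin'' --- that makes $\Psi$ depend on the point $y$ at which it is evaluated, which is circular. If instead you let $r$ depend on $(x',x_n)$ you lose the separable form that makes the Leibniz computations in \eqref{eq_Ahhaish2ieC5zeeMohyeelae} clean, and derivatives now fall on $r$ itself; and any fixed $r$ cannot simultaneously produce the weight $\abs{y}^j$ for $y$ both near and far from the origin. There is also no analogue of the Morrey--Sobolev step \eqref{eq_deish2Xahjiepugha5soonad} to convert the slab estimate into a pointwise Hardy weight, so the final $x_n$-integration step is not actually carried out. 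You should look for the algebraic identity instead: once you have $\operatorname{id}_E=\sum_i Q_i L(\nu_i)$, raising the $\nu_i$ to the $m$-th power to build $P$ with $L(\Deriv)^*[P]=\operatorname{id}_E$ is the observation that makes the whole argument collapse to one line.
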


\begin{proof}
Since the operator \(L (\Deriv)\) is cocancelling, there exist \(\xi_1, \dotsc, \xi_r \in \Rset^n \setminus \set{0}\) such that 
\(
 \bigcap_{i = 1}^r \ker L (\xi_i) = \set{ 0}
\).
By a standard linear algebra reasoning, there exist linear mappings \(Q_1, \dotsc, Q_r \in \Lin (F, E)\) such that 
\(
 \smash{\operatorname{id}_E = \sum_{i = 1}^r Q_i L (\xi_i)}
\).
Defining the polynomial \(P: \Rset^n \to \Lin (E, F)\) for each \(x \in \Rset^n\) by \(\smash{P (x) \defeq \sum_{i = 1}^r (\xi_i \cdot x)^m Q_i^* /m!}\), we have then \(\operatorname{id}_E = L (\Deriv)^*[P]\), and thus
\[
\int_{\Rset^n} f \cdot \varphi  = 
\int_{\Rset^n} {f}\cdot {L (\Deriv)^*[P] \varphi}.
\]
Since \(L(\Deriv)f = 0\), we compute by integration by parts
\[
\begin{split}
\int_{\Rset^n} f \cdot L (\Deriv)^*[P] \varphi
&= \int_{\Rset^n}  f \cdot L (\Deriv)^*[P] \varphi
- (-1)^m \int_{\Rset^n} L (\Deriv) f\cdot P \varphi\\
&=\int_{\Rset^n}  f \cdot (L (\Deriv)^*[P] \varphi
- L(\Deriv)^*[ P \varphi])\;,
\end{split}
\]
from which the conclusion \eqref{eq_xeiGh4eefai0xiephu2Choo7} follows.
\end{proof}

\begin{proof}[Proof of \cref{theorem_Hardy}]
Let \(G_A : \Rset^n \to \Lin (V, E)\) be the representation kernel given by \cref{proposition_fundamental_solution}, which exists since the operator \(A (\Deriv)\) is injectively elliptic.
To see the necessity of the cancellation, we take some \(e \in \bigcap_{\xi \in \Rset \setminus \set{0}} A (\xi)[V]\) and we perform a suitable regularization of \(G_A [e] \in C^\infty (\Rset^n \setminus \set{0}, V)\).

We assume now that the operator \(A(\Deriv)\) is cancelling. 
Choosing \(\varrho \in C^\infty_c(\Rset^n, \Rset)\) such that \(\varrho = 1\) on \(B_{1/4}(0)\) and \(\varrho=0\) on \(\Rset^n \setminus B_{1/2}(0)\), we define the kernels \(H_A : \Rset^n \times \Rset^n \to \Lin (V, E)\) and \(K_A : \Rset^n \times \Rset^n \to \Lin (V, E)\) for every \(x, y \in (\Rset^n \setminus \set{0})  \times \Rset^n \) with \(x \ne y\) by
\[
 H_A (x, y) \defeq \varrho \Bigl( \frac{y}{\abs{x}} \Bigr) \Deriv^{\ell} G_A (x)
\]
and
\[
 K_A (x, y) \defeq \Deriv^{\ell} G_A (x - y) - \varrho \Bigl( \frac{y}{\abs{x}} \Bigr) \Deriv^{\ell} G_A (x)\;.
\]
If \(L (\Deriv)\) are the compatibility conditions of order \(m\) given by \cref{proposition_compatibility_conditions}, we have that \(L (\Deriv)\) is cocancelling and that \(L (\Deriv) A (\Deriv)u = 0\).
By \cref{proposition_L1_compensation} and by the homogeneity of \(\Deriv^{\ell}_A G\)  (see \cref{proposition_fundamental_solution} \ref{it_Exiemaephu9Ex3teizaiziet}), we have
for every \(x \in \Rset^n \setminus \set{0}\), 
\begin{equation}
\begin{split}
\abs[\Big]{\int_{\Rset^n}  H_A (x, y) [A (\Deriv) u (y)] \dif y}
&\le \C \sum_{j = 1}^m \int_{B_{\abs{x}/2}(0)} \abs{A (\Deriv) u(y)} \abs{y}^j \abs{\Deriv_y^j H(x, y)} \dif y\\
&\le \C \sum_{j = 1}^m \int_{B_{\abs{x}/2}(0)} \frac{ \abs{A (\Deriv) u (y)} \abs{y}^j}{\abs{x}^{n - k + \ell + j}}  \dif y\\
&\le \C  \int_{B_{\abs{x}/2}(0)} \frac{ \abs{A (\Deriv) u (y)} \abs{y}}{\abs{x}^{n - k + \ell + 1}}  \dif y\;,
\end{split}
\end{equation}
and thus 
\begin{equation}
  \label{eq_yoh9Acohb7cah3Ohgoo7eish}
  \begin{split}
\int_{\Rset^n} &\abs[\Big]{\int_{\Rset^n}  H_A (x, y) [A (\Deriv) u (y)] \dif y}\frac{\mathrm{d} x}{\abs{x}^{k - \ell}} \\
&\le \Cl{cst_eiL2tahg3iigeidoozeeChei} \int_{\Rset^n} \int_{B_{\abs{x}/2}(0)} \frac{\abs{y}\abs{A (\Deriv) u (y)} }{\abs{x}^{n + 1}} \dif y \dif x\\
&=  \Cr{cst_eiL2tahg3iigeidoozeeChei} \int_{\Rset^n} \int_{\Rset^{n} \setminus B_{2 \abs{y}}(0)} \frac{\abs{y}\abs{A (\Deriv) u (y)} }{\abs{x}^{n + 1}} \dif x \dif y
\le \C \int_{\Rset^n} \abs{A (\Deriv) u}\;.
\end{split}
\end{equation}
Next we have 
\begin{equation}
  \label{eq_uFiel4aemou4aiNgizoow3ob}
\int_{\Rset^n} \abs[\Big]{\int_{\Rset^n}  K_A (x, y) [A (\Deriv) u (y)] \dif y} \frac{\mathrm{d} x}{\abs{x}^{k - \ell}} 
 \le  \int_{\Rset^n} \int_{\Rset^n}  \frac{\abs{K_A (x, y)}}{\abs{x}^{k - \ell}} \dif x \abs{A (\Deriv) u (y)}\dif y\;.
\end{equation}
Using again the homogeneity of \(\Deriv^\ell G_A\), we get if \(\abs{x} < 2 \abs{y}\)
\[
 \abs{K_A (x, y)} \le \frac{\C}{\abs{x-y}^{n - k + \ell} }\:,
\]
and if \(\abs{x} \ge 2 \abs{y}\)
\[
\abs{K_A (x, y)} \le  \frac{\C\abs{y}}{\abs{x}^{n - k + \ell + 1} }\;,
\]
so that, since \(k - \ell < n\),
\begin{equation}
  \label{eq_ne4dae2Ocai4Uh2ar1iuk4so}
  \begin{split}
 \int_{\Rset^n}  \frac{\abs{K_A (x, y)}}{\abs{x}^{k - \ell}} \dif x
&\le \C \brk[\bigg]{\int_{B_{2 \abs{y}}(0)} \frac{\mathrm{d} x}{\abs{x - y}^{n - k + \ell}\abs{x}^{k - \ell}}  + \int_{\Rset^n \setminus B_{2 \abs{y}}(0)}\!\! \ \frac{\abs{y}}{\abs{x}^{n + 1}} \dif x}\\
&\le \C\;.
\end{split}
\end{equation}
Combining \eqref{eq_uFiel4aemou4aiNgizoow3ob} and \eqref{eq_ne4dae2Ocai4Uh2ar1iuk4so}, we conclude that 
\begin{equation}
  \label{eq_datahlieghaivi0ahdooshaY}
  \int_{\Rset^n} \abs[\Big]{\int_{\Rset^n} K_A (x, y) [A (\Deriv) u (y)] \dif y} \frac{\mathrm{d} x}{\abs{x}^{k - \ell}}  \le \C \int_{\Rset^n} \abs{A (\Deriv) u }\;.
\end{equation}
The estimate \eqref{eq_ahHaichooxah6pha5thah6ie} then follows from \eqref{eq_yoh9Acohb7cah3Ohgoo7eish} and \eqref{eq_datahlieghaivi0ahdooshaY}.
\end{proof}

When \(k \ge n\), as a consequence of \cref{theorem_Hardy}, one gets the estimate 
\begin{equation}
\label{eq_cequae1eogheG4tohHohB9ae}
\norm{\Deriv^{n - k} u}_{L^\infty (\Rset^n)}
\le 
\C
\int_{\Rset^n} \abs{A (\Deriv)[u]}\;,
\end{equation}
which follows either from the boundedness of the resulting representation formula or directly from \eqref{eq_ahHaichooxah6pha5thah6ie} and the Sobolev representation formula \cite{Bousquet_VanSchaftingen_2014}*{th.\ 1.3}.
In fact, if the operator \(A (\Deriv)\) is injectively elliptic, Bogdan \familyname{Raiță} \cite{Raita_2019} has proved that the estimate \eqref{eq_cequae1eogheG4tohHohB9ae} is equivalent with the \emph{weak cancellation} property that for every \(e \in \bigcap_{\xi \in \Rset^n \setminus \set{0}} A (\xi)[V]\) one has
\begin{equation}
  \label{eq_aiboh2iesaeMisee2oyahah4}
  \int_{\Sset^{n - 1}} \xi^{\otimes k - n} A(\xi)^{-1} [e] \dif \xi = 0\;
\end{equation}
(for a proof, see also \cite{VanSchaftingen_2023}*{\S 5.4}).

The cancellation condition also appears in other settings such as the characterization of operators for which \(A(\Deriv) u\) being a measure implies the continuity of \(u\) \cite{Raita_Skorobogatova_2020}.

\begin{bibsection}
\begin{biblist} 

\bib{Arroyo_Rabasa_2020}{article}{
    author={Arroyo-Rabasa, Adolfo},
    title={An elementary approach to the dimension of measures satisfying a
    first-order linear PDE constraint},
    journal={Proc. Amer. Math. Soc.},
    volume={148},
    date={2020},
    number={1},
    pages={273--282},
    issn={0002-9939},
    doi={10.1090/proc/14732},
}
    \bib{ArroyoRabasa_DePhilippis_Hirsch_Rindler_2019}{article}{
        author={Arroyo-Rabasa, Adolfo},
        author={De Philippis, Guido},
        author={Hirsch, Jonas},
        author={Rindler, Filip},
        title={Dimensional estimates and rectifiability for measures satisfying
        linear PDE constraints},
        journal={Geom. Funct. Anal.},
        volume={29},
        date={2019},
        number={3},
        pages={639--658},
        issn={1016-443X},
        doi={10.1007/s00039-019-00497-1},
    }
\bib{Aubin_1976}{article}{
    author={Aubin, Thierry},
    title={Probl\`emes isop\'{e}rim\'{e}triques et espaces de Sobolev},
    journal={J. Differential Geometry},
    volume={11},
    date={1976},
    number={4},
    pages={573--598},
    issn={0022-040X},
}

\bib{Bourgain_Brezis_2002}{article}{
    author={Bourgain, Jean},
    author={Brezis, Ha\"{i}m},
    title={Sur l'\'{e}quation \(\operatorname{div}\,u=f\)},
    journal={C. R. Math. Acad. Sci. Paris},
    volume={334},
    date={2002},
    number={11},
    pages={973--976},
    issn={1631-073X},
}
    
\bib{Bourgain_Brezis_2003}{article}{
    author={Bourgain, Jean},
    author={Brezis, Ha\"{i}m},
    title={On the equation \(\operatorname{div}\, Y=f\) and application to control of phases},
    journal={J. Amer. Math. Soc.},
    volume={16},
    date={2003},
    number={2},
    pages={393--426},
    issn={0894-0347},
    doi={10.1090/S0894-0347-02-00411-3},
}

\bib{Bourgain_Brezis_2004}{article}{
    author={Bourgain, Jean},
    author={Brezis, Ha\"{i}m},
    title={New estimates for the Laplacian, the div--curl, and related Hodge systems},
    journal={C. R. Math. Acad. Sci. Paris},
    volume={338},
    date={2004},
    number={7},
    pages={539--543},
    issn={1631-073X},
    doi={10.1016/j.crma.2003.12.031},
}

\bib{Bourgain_Brezis_2007}{article}{
    author={Bourgain, Jean},
    author={Brezis, Ha\"{i}m},
    title={New estimates for elliptic equations and Hodge type systems},
    journal={J. Eur. Math. Soc. (JEMS)},
    volume={9},
    date={2007},
    number={2},
    pages={277--315},
    issn={1435-9855},
    doi={10.4171/JEMS/80},
}

\bib{Bourgain_Brezis_Mironescu_2004}{article}{
    author={Bourgain, Jean},
    author={Brezis, Haim},
    author={Mironescu, Petru},
    title={\(H^{1/2}\) maps with values into the circle: minimal connections, lifting, and the Ginzburg--Landau equation},
    journal={Publ. Math. Inst. Hautes \'{E}tudes Sci.},
    number={99},
    date={2004},
    pages={1--115},
    issn={0073-8301},
    doi={10.1007/s10240-004-0019-5},
}

\bib{Bousquet_Mironescu_2011}{article}{
   author={Bousquet, Pierre},
   author={Mironescu, Petru},
   title={An elementary proof of an inequality of Maz'ya involving $L^1$
   vector fields},
   conference={
      title={Nonlinear elliptic partial differential equations},
   },
   book={
      series={Contemp. Math.},
      volume={540},
      publisher={Amer. Math. Soc.}, 
      address={Providence, R.I.},
   },
   date={2011},
   pages={59--63},
   doi={10.1090/conm/540/10659},
}
   
\bib{Bousquet_VanSchaftingen_2014}{article}{
    author={Bousquet, Pierre},
    author={Van Schaftingen, Jean},
    title={Hardy--Sobolev inequalities for vector fields and canceling differential operators},
    journal={Indiana Univ. Math. J.},
    volume={63},
    date={2014},
    number={5},
    pages={1419--1445},
    issn={0022-2518},
    doi={10.1512/iumj.2014.63.5395},
}

\bib{Brezis_2011}{book}{
    author={Brezis, Ha{\"{\i}}m},
    title={Functional analysis, Sobolev spaces and partial differential
    equations},
    series={Universitext},
    publisher={Springer}, 
    address={New York},
    date={2011},
    pages={xiv+599},
    isbn={978-0-387-70913-0},
}

\bib{Brezis_VanSchaftingen_2007}{article}{
    author={Brezis, Ha\"{i}m},
    author={Van Schaftingen, Jean},
    title={Boundary estimates for elliptic systems with \(L^1\)--data},
    journal={Calc. Var. Partial Differential Equations},
    volume={30},
    date={2007},
    number={3},
    pages={369--388},
    issn={0944-2669},
    doi={10.1007/s00526-007-0094-9},
}
    
\bib{Brezis_VanSchaftingen_2008}{article}{
    author={Brezis, Ha\"{i}m},
    author={Van Schaftingen, Jean},
    title={Circulation integrals and critical Sobolev spaces: problems of
      optimal constants},
    conference={
      title={Perspectives in partial differential equations, harmonic
        analysis and applications},
    },
    book={
      series={Proc. Sympos. Pure Math.},
      volume={79},
      publisher={Amer. Math. Soc., Providence, R.I.},
    },
    date={2008},
    pages={33--47},
    doi={10.1090/pspum/079/2500488},
}    

\bib{Chanillo_VanSchaftingen_2009}{article}{
    author={Chanillo, Sagun},
    author={Van Schaftingen, Jean},
    title={Subelliptic Bourgain-Brezis estimates on groups},
    journal={Math. Res. Lett.},
    volume={16},
    date={2009},
    number={3},
    pages={487--501},
    issn={1073-2780},
    doi={10.4310/MRL.2009.v16.n3.a9},
}

\bib{Chanillo_VanSchaftingen_Yung_2017_Variations}{article}{
    author={Chanillo, Sagun},
    author={Van Schaftingen, Jean},
    author={Yung, Po-Lam},
    title={Variations on a proof of a borderline Bourgain--Brezis Sobolev
    embedding theorem},
    journal={Chinese Ann. Math. Ser. B},
    volume={38},
    date={2017},
    number={1},
    pages={235--252},
    issn={0252-9599},
    doi={10.1007/s11401-016-1069-y},
}

\bib{Chanillo_VanSchaftingen_Yung_2017_Symmetric}{article}{
    author={Chanillo, Sagun},
    author={Van Schaftingen, Jean},
    author={Yung, Po-Lam},
    title={Bourgain--Brezis inequalities on symmetric spaces of non-compact type},
    journal={J. Funct. Anal.},
    volume={273},
    date={2017},
    number={4},
    pages={1504--1547},
    issn={0022-1236},
    doi={10.1016/j.jfa.2017.05.005},
}
    
\bib{Ciarlet_2013}{book}{
    author={Ciarlet, Philippe G.},
    title={Linear and nonlinear functional analysis with applications},
    publisher={Society for Industrial and Applied Mathematics}, 
    address={Philadelphia, Pa.},
    date={2013},
    pages={xiv+832},
    isbn={978-1-611972-58-0},
} 

\bib{Conti_Faraco_Maggi_2005}{article}{
    author={Conti, Sergio},
    author={Faraco, Daniel},
    author={Maggi, Francesco},
    title={A new approach to counterexamples to $L^1$ estimates: Korn's inequality, geometric rigidity, and regularity for gradients of separately convex functions},
    journal={Arch. Ration. Mech. Anal.},
    volume={175},
    date={2005},
    number={2},
    pages={287--300},
    issn={0003-9527},
    doi={10.1007/s00205-004-0350-5},
}

\bib{DePhilippis_Rindler_2016}{article}{
    author={De Philippis, Guido},
    author={Rindler, Filip},
    title={On the structure of \(\mathcal{A}\)-free measures and applications},
    journal={Ann. of Math. (2)},
    volume={184},
    date={2016},
    number={3},
    pages={1017--1039},
    issn={0003-486X},
    doi={10.4007/annals.2016.184.3.10},
}

\bib{Federer_Fleming_1960}{article}{
    author={Federer, Herbert},
    author={Fleming, Wendell H.},
    title={Normal and integral currents},
    journal={Ann. of Math. (2)},
    volume={72},
    date={1960},
    pages={458--520},
    issn={0003-486X},
    doi={10.2307/1970227},
}

\bib{Gagliardo_1958}{article}{
    author={Gagliardo, Emilio},
    title={Propriet\`a di alcune classi di funzioni in pi\`u variabili},
    journal={Ricerche Mat.},
    volume={7},
    date={1958},
    pages={102--137},
}

\bib{Gmeineder_Raita_2019}{article}{
    author={Gmeineder, Franz},
    author={Rai\c{t}\u{a}, Bogdan},
    title={Embeddings for $\mathbb{A}$-weakly differentiable functions on
    domains},
    journal={J. Funct. Anal.},
    volume={277},
    date={2019},
    number={12},
    pages={108278, 33},
    issn={0022-1236},
    doi={10.1016/j.jfa.2019.108278},
}

\bib{Gmeineder_Raita_VanSchaftingen}{article}{
    author={Gmeineder, Franz},
    author={Rai\c{t}\u{a}, Bogdan},
    author={Van Schaftingen, Jean},
    title={Boundary ellipticity and limiting \(L^1\)-estimates on halfspaces},
    note={arXiv:2211.08167},
}

\bib{Hernandez_Raita_Spector_2022}{article}{
    author={Hernandez, F.}, 
    author={Raiță, B.},
    author={Spector, D.},
    title={Endpoint \(L^1\) estimates for Hodge systems},
    journal={Math. Ann.},
    date={2022},
    doi={10.1007/s00208-022-02383-y},
}

\bib{Hormander_1990_I}{book}{
    author={H{\"o}rmander, Lars},
    title={The analysis of linear partial differential operators},
    part={I},
    series={Grundlehren der Mathematischen Wissenschaften},
    volume={256},
    edition={2},
    subtitle={Distribution theory and Fourier analysis},
    publisher={Springer}, 
    address={Berlin},
    date={1990},
    pages={xii+440},
    isbn={3-540-52345-6},
    doi={10.1007/978-3-642-61497-2},
}

\bib{Kirchheim_Kristensen_2011}{article}{
    author={Kirchheim, Bernd},
    author={Kristensen, Jan},
    title={Automatic convexity of rank-1 convex functions},
    journal={C. R. Math. Acad. Sci. Paris},
    volume={349},
    date={2011},
    number={7-8},
    pages={407--409},
    issn={1631-073X},
    doi={10.1016/j.crma.2011.03.013},
}
    
\bib{Kirchheim_Kristensen_2016}{article}{
    author={Kirchheim, Bernd},
    author={Kristensen, Jan},
    title={On rank one convex functions that are homogeneous of degree one},
    journal={Arch. Ration. Mech. Anal.},
    volume={221},
    date={2016},
    number={1},
    pages={527--558},
    issn={0003-9527},
    doi={10.1007/s00205-016-0967-1},
}

\bib{Lanzani_Stein_2005}{article}{
    author={Lanzani, Loredana},
    author={Stein, Elias M.},
    title={A note on div curl inequalities},
    journal={Math. Res. Lett.},
    volume={12},
    date={2005},
    number={1},
    pages={57--61},
    issn={1073-2780},
    doi={10.4310/MRL.2005.v12.n1.a6},
}

\bib{Mazya_1960}{article}{
    author={Maz\cprime ya, Vladimir G.},
    title={Classes of domains and imbedding theorems for function spaces},
    journal={Soviet Math. Dokl.},
    volume={1},
    date={1960},
    pages={882--885},
    issn={0197-6788},
}

\bib{Mazya_2010}{article}{
    author={Maz\cprime{}ya, Vladimir G.},
    title={Estimates for differential operators of vector analysis involving \(L^1\)-norm},
    journal={J. Eur. Math. Soc. (JEMS)},
    volume={12},
    date={2010},
    number={1},
    pages={221--240},
    issn={1435-9855},
    doi={10.4171/JEMS/195},
}

\bib{Mazya_2011}{book}{
    author={Maz\cprime ya, Vladimir G.},
    title={Sobolev spaces with applications to elliptic partial differential equations},
    series={Grundlehren der mathematischen Wissenschaften},
    volume={342},
    edition={2},
    publisher={Springer}, 
    address={Heidelberg},
    date={2011},
    pages={xxviii+866},
    isbn={978-3-642-15563-5},
    doi={10.1007/978-3-642-15564-2},
}

\bib{Nirenberg_1959}{article}{
    author={Nirenberg, L.},
    title={On elliptic partial differential equations},
    journal={Ann. Scuola Norm. Sup. Pisa (3)},
    volume={13},
    date={1959},
    pages={115--162},
}

\bib{Ornstein_1962}{article}{
    author={Ornstein, Donald},
    title={A non-equality for differential operators in the $L_{1}$ norm},
    journal={Arch. Rational Mech. Anal.},
    volume={11},
    date={1962},
    pages={40--49},
    issn={0003-9527},
    doi={10.1007/BF00253928},
}

\bib{Raita_2019}{article}{
    author={Rai\c{t}\u{a}, Bogdan},
    title={Critical \(\mathrm{L}^p\)--differentiability of \(\mathrm{BV}^{\mathbb{A}}\)-maps and canceling operators},
    journal={Trans. Amer. Math. Soc.},
    volume={372},
    date={2019},
    number={10},
    pages={7297--7326},
    issn={0002-9947},
    doi={10.1090/tran/7878},
}

\bib{Raita_Skorobogatova_2020}{article}{
    author={Rai\c{t}\u{a}, Bogdan},
    author={Skorobogatova, Anna},
    title={Continuity and canceling operators of order $n$ on \(\mathbb{R}^n\)},
    journal={Calc. Var. Partial Differential Equations},
    volume={59},
    date={2020},
    number={2},
    pages={Paper No. 85, 17},
    issn={0944-2669},
    doi={10.1007/s00526-020-01739-z},
}

\bib{Roginskaya_Wojciechowski_2006}{article}{
   author={Roginskaya, Maria},
   author={Wojciechowski, Micha\l },
   title={Singularity of vector valued measures in terms of Fourier
   transform},
   journal={J. Fourier Anal. Appl.},
   volume={12},
   date={2006},
   number={2},
   pages={213--223},
   issn={1069-5869},
   doi={10.1007/s00041-005-5030-9},
}

\bib{Schwartz_1950}{book}{
    author={Schwartz, L.},
    title={Th\'{e}orie des distributions},
      part={I},
      series={Publ. Inst. Math. Univ. Strasbourg},
      volume={9},
      publisher={Hermann \& Cie, Paris},
      date={1950},
      pages={148},
}

\bib{Schwartz_1951}{book}{
    author={Schwartz, L.},
    title={Th\'{e}orie des distributions},
    part={II},
    series={Publ. Inst. Math. Univ. Strasbourg},
    volume={10},
    publisher={Hermann \& Cie, Paris},
    date={1951},
   pages={169},
}

\bib{Smirnov_1993}{article}{
    author={Smirnov, S. K.},
    title={Decomposition of solenoidal vector charges into elementary solenoids, and the structure of normal one-dimensional flows},
    language={Russian},
    journal={Algebra i Analiz},
    volume={5},
    date={1993},
    number={4},
    pages={206--238},
    issn={0234-0852},
    translation={
        journal={St. Petersburg Math. J.},
        volume={5},
        date={1994},
        number={4},
        pages={841--867},
        issn={1061-0022},
    },
}

\bib{Sobolev_1938}{article}{
    author = {Sobolev, S.},
    title = {Sur un th\'eor\`eme d'analyse fonctionnelle},
    journal = {{Rec. Math. Moscou, n. Ser.}},
    volume = {4},
    pages = {471--497},
    Year = {1938},
    publisher = {Moscow Mathematical Society, Moscow},
    language = {Russian with French Summary},
}

\bib{Spector_VanSchaftingen_2019}{article}{
   author={Spector, Daniel},
   author={Van Schaftingen, Jean},
   title={Optimal embeddings into Lorentz spaces for some vector
   differential operators via Gagliardo's lemma},
   journal={Atti Accad. Naz. Lincei Rend. Lincei Mat. Appl.},
   volume={30},
   date={2019},
   number={3},
   pages={413--436},
   issn={1120-6330},
   doi={10.4171/RLM/854},
}

\bib{Stein_1970}{book}{
    author={Stein, Elias M.},
    title={Singular integrals and differentiability properties of functions},
    series={Princeton Mathematical Series, No. 30},
    publisher={Princeton University Press}, 
    address={Princeton, N.J.},
    date={1970},
    pages={xiv+290},
}

\bib{Stolyarov_2020}{article}{
    eprint={https://arxiv.org/abs/2010.05297},
    title={Hardy--Littlewood--Sobolev inequality for \(p=1\)},
    author={Stolyarov, Dimitriy},
}

\bib{Strauss_1973}{article}{
    author={Strauss, Monty J.},
    title={Variations of Korn's and Sobolev's equalities},
    conference={
      title={Partial differential equations},
      address={Univ. California,
        Berkeley, Calif.},
      date={1971},
    },
    book={
      series={Proc. Sympos. Pure Math.}, 
      volume={XXIII}, 
      publisher={Amer. Math. Soc.}, 
      address={Providence, R.I.},
    },
    date={1973},
    pages={207--214},
}

\bib{Talenti_1976}{article}{
   author={Talenti, Giorgio},
   title={Best constant in Sobolev inequality},
   journal={Ann. Mat. Pura Appl. (4)},
   volume={110},
   date={1976},
   pages={353--372},
   issn={0003-4622},
   doi={10.1007/BF02418013},
}

\bib{Timoshenko_Goodier_1951}{book}{
    author={Timoshenko, S.},
    author={Goodier, J. N.},
    title={Theory of Elasticity},
    edition={2},
    publisher={McGraw-Hill}, 
    address={New York--Toronto--London},
    date={1951},
    pages={xviii+506},
}    

\bib{VanSchaftingen_2004_circ}{article}{
    author={Van Schaftingen, Jean},
    title={A simple proof of an inequality of Bourgain, Brezis and Mironescu},
    journal={C. R. Math. Acad. Sci. Paris},
    volume={338},
    date={2004},
    number={1},
    pages={23--26},
    issn={1631-073X},
    doi={10.1016/j.crma.2003.10.036},
}
    
\bib{VanSchaftingen_2004_div}{article}{
    author={Van Schaftingen, Jean},
    title={Estimates for $L^1$-vector fields},
    journal={C. R. Math. Acad. Sci. Paris},
    volume={339},
    date={2004},
    number={3},
    pages={181--186},
    issn={1631-073X},
    doi={10.1016/j.crma.2004.05.013},
}

\bib{VanSchaftingen_2004_ARB}{article}{
    author={Van Schaftingen, Jean},
    title={Estimates for $L^1$ vector fields with a second order condition},
    journal={Acad. Roy. Belg. Bull. Cl. Sci. (6)},
    volume={15},
    date={2004},
    number={1--6},
    pages={103--112},
    issn={0001-4141},
}

\bib{VanSchaftingen_2006}{article}{
    author={Van Schaftingen, Jean},
    title={Function spaces between BMO and critical Sobolev spaces},
    journal={J. Funct. Anal.},
    volume={236},
    date={2006},
    number={2},
    pages={490--516},
    issn={0022-1236},
    doi={10.1016/j.jfa.2006.03.011},
}

\bib{VanSchaftingen_2008}{article}{
    author={Van Schaftingen, Jean},
    title={Estimates for $L^1$ vector fields under higher-order differential conditions},
    journal={J. Eur. Math. Soc. (JEMS)},
    volume={10},
    date={2008},
    number={4},
    pages={867--882},
    issn={1435-9855},
    doi={10.4171/JEMS/133},
}

\bib{VanSchaftingen_2013}{article}{
    author={Van Schaftingen, Jean},
    title={Limiting Sobolev inequalities for vector fields and canceling
      linear differential operators},
    journal={J. Eur. Math. Soc. (JEMS)},
    volume={15},
    date={2013},
    number={3},
    pages={877--921},
    issn={1435-9855},
    doi={10.4171/JEMS/380},
}

\bib{VanSchaftingen_2014}{article}{
    author={Van Schaftingen, Jean},
    title={Limiting Bourgain-Brezis estimates for systems of linear differential equations: theme and variations},
    journal={J. Fixed Point Theory Appl.},
    volume={15},
    date={2014},
    number={2},
    pages={273--297},
    issn={1661-7738},
    doi={10.1007/s11784-014-0177-0},
}

\bib{VanSchaftingen_2023}{article}{ 
    author={Van Schaftingen, Jean},
    title={Injective ellipticity, cancelling operators, and endpoint Gagliardo-Nirenberg-Sobolev inequalities for vector fields},
    eprint={http://arxiv.org/abs/2302.01201},
    note={Lecture notes for the CIME summer school “Geometric and analytic aspects of functional variational principles”, June 27 -- July 1, 2022},
}

\bib{VanSchaftingen_Yung_2022}{article}{
    author={Van Schaftingen, Jean},
    author={Yung, Po-Lam},
    title={Limiting Sobolev and Hardy inequalities on stratified homogeneous
    groups},
    journal={Ann. Fenn. Math.},
    volume={47},
    date={2022},
    number={2},
    pages={1065--1098},
    issn={2737-0690},
}

\bib{Willem_2013}{book}{
    author={Willem, Michel},
    title={Functional analysis},
    series={Cornerstones},
    subtitle={Fundamentals and applications},
    publisher={Birkh\"{a}user/Springer, New York},
    date={2013},
    pages={xiv+213},
    isbn={978-1-4614-7003-8},
    isbn={978-1-4614-7004-5},
    doi={10.1007/978-1-4614-7004-5},
}
  \end{biblist}

\end{bibsection}

\end{document}